\newtheorem{thm}{Theorem}[section]
\newtheorem*{thm*}{Theorem}
\newtheorem{lem}[thm]{Lemma}
\newtheorem{cor}[thm]{Corollary}
\theoremstyle{definition}
\newtheorem{defin}[thm]{Definition}
\newtheorem{rem}[thm]{Remark}
\DeclareMathOperator{\dist}{dist}
\DeclareMathOperator{\diam}{diam}
\DeclareMathOperator{\lin}{span}
\DeclareMathOperator{\simp}{\Delta}
\DeclareMathOperator{\graph}{graph}
\newcommand{\E}{\mathcal{E}}
\newcommand{\M}{\mathcal{M}}
\newcommand{\N}{\mathbb{N}}
\newcommand{\Z}{\mathbb{Z}}
\newcommand{\Ball}{\mathbb B^n}
\newcommand{\Disc}{\mathbb B^m}
\newcommand{\R}{\mathbb{R}}
\newcommand{\HM}{\mathcal{H}}
\newcommand{\DC}{\mathcal{K}}
\date{\today}
\title{Sharp Boundedness and Regularizing effects of the integral Menger
  curvature for submanifolds}
\author{Simon Blatt \footnote{Mathematics Institute,
    Zeeman Building,
    University of Warwick,
    Coventry CV4 7AL,
    United Kingdom,
    S.Blatt@warwick.ac.uk},
  S{\l}awomir Kolasi{\'n}ski\footnote{Institute of Mathematics,
    University of Warsaw,
    Banacha 2, 02-097 Warsaw,
    Poland,
    s.kolasinski@mimuw.edu.pl}}
\begin{document}

\maketitle

\begin{abstract}
  In this paper we show that embedded and compact $C^1$ manifolds have finite
  integral Menger curvature if and only if they are locally graphs of certain
  Sobolev-Slobodeckij spaces. Furthermore, we prove that for some intermediate
  energies of integral Menger type a similar characterization of objects with
  finite energy can be given.
\end{abstract}

\tableofcontents

\section{Introduction}

To study the geometry of metric spaces, Karl Menger found a way to define the
curvature of a curve without using any parameterization of this geometric
object~\cite{Menger1930}. For each triple of points $(x,y,z)$ lying on the
curve he looked at the reciprocal of the radius of the circumscribing circle
of the three points. This quantity is nowadays named ``Menger curvature'' and
will be denoted by $c(x,y,z)$ in this article.  Menger observed that one
obtains the curvature of the curve at a point $p$ by the limit of the this
Menger curvature $(x,y,z)$ as the three points converge to $p$.

The growing interest in this quantity during the last years started with the
observation that Menger curvature has a tight relation to many modern fields
in mathematics apart from metric geometry. A milestone is certainly the
discovery of the intimate relation between total Menger curvature of
an~$\HM^1$ measurable set $K$ - given by
\begin{equation*}
  \M_2 (K) := \int_{K}\int_{K} \int_{K} c^2(x,y,z)\ d\HM^1_x\ d\HM^1_y\ d\HM^1_z \quad \text{-}
\end{equation*}
and harmonic analysis, rectifiability, and analytic capacity
(see~\cite{MatSurv98} or~\cite{TolSurv06}). M.~Leger proved
in~\cite{Leger1999} that finite global Menger curvature implies that the set
is rectifiable. Using this result, Guy David proved that $\M_2(K) < \infty$ is a
sufficient condition for a set $K$ to have vanishing analytic capacity. This
enabled him to prove the Vitushkin's conjecture~\cite{David1998} for sets of
finite one-dimensional Hausdorff measure.

Another application of Menger curvature is its use as basic building block in
the construction of so called ``knot energies'' - energies that penalize self
intersections and thus can hopefully be minimized within a given knot class.
These energies play an important role in the modeling of the structure of polymer chains like
proteins and DNA.

The first to use Menger curvature to define such self-repulsive energies were Oscar Gonzales and John H. Maddocks.
In~\cite{Gonzalez1999}, they introduced and analyzed the notion of
\emph{global radius of curvature} of a curve $\gamma$ given by
\begin{equation*}
  \rho (\gamma):=\inf_{x,y,z \in \gamma(\R / \Z)} \frac 1 {c(x,y,z)}.
\end{equation*}
At the end of this article, they also suggest the investigation of the
integral versions
\begin{align*}
  \mathcal U_p (\gamma) &:= \int_{\R / \Z} \sup_{y,z \in \R / \Z } c^p(\gamma(x), \gamma(y),\gamma(z)) |\gamma'(x)|
  \ dx, \\
  \mathcal I_p (\gamma) &:= \int_{\R / \Z} \int_{\R / \Z} \sup_{z \in \R / \Z } c^p(\gamma(x), \gamma(y),\gamma(z)) 
  |\gamma'(x)| |\gamma'(y)| \,
  \ dx\ dy, \\
  \intertext{and}
  \mathcal M_p (\gamma) &:= \int_{\R / \Z} \int_{\R / \Z} \int_{ \R / \Z } c^p(\gamma(x), \gamma(y),\gamma(z)) 
  |\gamma'(x)| |\gamma'(y)| |\gamma'(z)|\,
  \ dx\ dy\ dz,
\end{align*}
- a program that was pushed forward in a series of groundbreaking papers by
Pawe{\l} Strzelecki, Heiko von der Mosel and Marta
Szuma{\'n}ska~\cite{Strzelecki2007, Strzelecki2010} in which they could show,
apart from other things, that for suitable $p$ these energies are self repulsive
and possess certain regularizing effects - and thus are indeed worth being
called ``knot energies''.

Generalizing the notion of Menger curvature from one-dimensional to higher
dimensional objects, was not a trivial task. The obvious generalization --
i.e. taking the inverse of the radius of an $m$-dimensional sphere defined by
its $m+2$ points -- seems not to be the right Ansatz from an analytic point of
view.  Strzelecki and von der Mosel have given examples
(see~\cite[Appendix~B]{0911.2095}) of smooth embedded manifolds for which the
resulting integral curvatures are unbounded. But more promising candidates were
introduce and successfully investigated in \cite{0805.1425, MR2558685,
  0911.2095} and \cite{Kolasinski2011}.

In this article we will look at the variant of integral Menger curvature for
submanifolds of the Euclidean space of arbitrary dimension and codimension
introduced in~\cite{Kolasinski2011} -- and sometimes laxly refer to it as Menger
curvature being aware that there are other quantities that deserve this name.

Motivated by the formula
\begin{equation*}
  c(x,y,z) = 4 \frac{\mathcal H^2(\simp(x,y,z))}{|x-y||y-z||z-x|},
\end{equation*}
where $\simp(x,y,z)$ stands for the convex hull of the points $x,y$, and $z$, we
are led to use the quantity
\begin{displaymath}
  \DC(x_0, \ldots, x_{m+1}) = \frac{\HM^{m+1}(\simp(x_0, \ldots, x_{m+1}))}
  {(\diam \{x_0, \ldots, x_{m+1}\})^{m+2}}
\end{displaymath}
as a substitute for the Menger curvature of curves. Here again $\simp(x_0,
\ldots,x_{m+1})$ stands for the convex hull of the points $x_0$, \ldots,
$x_{m+1}$ in $\R^n$. We take the diameter of the set of points $\{x_0, \ldots,
x_{m+1}\}$ to the power $m+2$ in the denominator, which guarantees that this
quantity scales like a curvature.

It is easy to check that for triples $(x,y,z)$ we always have $4 \DC(x,y,z) \le
c(x,y,z)$ and that for a class of triangles with comparable sides, (i.e. $|x-y|
\simeq |y-z| \simeq |z-x|$) the two quantities $\DC(x,y,z)$ and $c(x,y,z)$ are
comparable. It is also obvious that for general triangles this is not true.

Following the suggestion of Gonzalez and Maddocks mentioned above, one is led to
the following \emph{intermediate integral Menger curvatures}
\begin{displaymath}
  \E_p^k(\Sigma) = \int_{\Sigma^k} \sup_{x_k, \ldots x_{m+1} \in \Sigma}
  \DC(x_0,\ldots, x_{k-1})^p\ d \HM^{mk}_{x_0, \ldots, x_{k-1}} \,.
\end{displaymath}
for $k \in \{1, \ldots, m+1\}$ and the \emph{integral Menger curvature}
\begin{displaymath}
  \E_p = \E_p^{m+2}(\Sigma) = \int_{\Sigma^{m+2}}
  \DC(x_0,\ldots, x_{m+1})^p\ d \HM^{m(m+2)}_{x_0, \ldots, x_{m+1}} \,
\end{displaymath}
discussed in~\cite{Kolasinski2011}.

The main result of this article is the following characterization of all compact
embedded $C^1$ submanifolds with finite energy $\E^k_p$ for $k \in \{2, \ldots,
m+1\}$.
\begin{thm}
  \label{thm:MainTheorem}
  Let $m,n,k \in \N$, $p \in \R$ satisfy $m < n$, $2 \le k \le m+2$ and $p >
  m(k-1)$. Furthermore, let $\Sigma \subseteq \R^n$ be a compact
  $m$-dimensional $C^1$ manifold and $s = 1 - \frac{m(k-1)}p \in (0,1)$. Then
  $\E_p^k(\Sigma)$ is finite if and only if $\Sigma$ can locally be
  represented as the graph of a~function belonging to the Sobolev-Slobodeckij
  space $W^{1+s,p}(\R^m,\R^{n-m})$.
\end{thm}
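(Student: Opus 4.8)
The plan is to establish both directions of the equivalence by reducing everything to a local statement about graphs, and then characterizing the finiteness of the energy functional via a Sobolev–Slobodeckij seminorm estimate on the graphing function. Since $\Sigma$ is compact and $C^1$, it can be covered by finitely many charts in which it is the graph of a $C^1$ function $f \colon U \subseteq \R^m \to \R^{n-m}$ over a tangent plane. Because the energy $\E_p^k$ is a multiple integral over $\Sigma^k$ with an outer supremum over the remaining points in $\Sigma$, and because $\DC$ decays when points are far apart (the diameter in the denominator controls the contribution from distant clusters), I would first argue that the finiteness of $\E_p^k(\Sigma)$ is governed entirely by the behavior of the integrand when all of $x_0,\dots,x_{k-1}$ are close together, i.e. by a local, ``diagonal'' estimate. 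The contribution from well-separated points is bounded by a constant depending only on $\diam \Sigma$ and the total measure $\HM^m(\Sigma)$, hence always finite.

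\textbf{Localization and reduction to a seminorm.} Working in a single chart with $\Sigma = \graph f$, I would substitute $x_i = (u_i, f(u_i))$ for $u_i \in U$ and rewrite the defining integral in terms of the $u_i$. The key geometric input is a pointwise comparison: for nearby points on a $C^1$ graph, the quantity $\HM^{m+1}(\simp(x_0,\dots,x_{m+1}))$ measuring the $(m+1)$-volume of the simplex is, up to the diameter normalization, controlled by finite differences of $Df$. Concretely, the volume of the simplex vanishes to first order when the points are coplanar, so $\DC$ is essentially measuring the failure of $Df$ to be constant across the point cluster — that is, an averaged oscillation $|Df(u_i) - Df(u_j)|$ divided by the appropriate power of $|u_i - u_j|$. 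Raising this to the power $p$ and integrating against $d\HM^{mk}$ should produce precisely the Gagliardo/Slobodeckij double integral
\begin{displaymath}
  \int_U \int_U \frac{|Df(u) - Df(v)|^p}{|u-v|^{m + sp}}\, du\, dv,
\end{displaymath}
with $s p = p - m(k-1)$, matching $s = 1 - \frac{m(k-1)}{p}$; finiteness of this quantity is equivalent to $f \in W^{1+s,p}$.

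\textbf{The two directions.} For the ``only if'' direction, I would bound the seminorm of $Df$ from above by $\E_p^k$: here the supremum over the free points $x_k,\dots,x_{m+1}$ is helpful, since one may \emph{choose} those points to make the simplex volume comparable to the oscillation of $Df$ (picking the extra vertices so the simplex is nondegenerate and comparable-sided). For the ``if'' direction, I would bound $\E_p^k$ from above by the seminorm: here one must control the supremum over the free points from above, which requires showing the simplex volume never exceeds the oscillation bound regardless of where the free points sit. \textbf{The hard part will be} this second, upper-bound direction — taming the outer supremum. Unlike the full energy $\E_p$ (the case $k = m+2$, excluded here), the intermediate energies carry a genuine $\sup$ over the last $m+2-k$ arguments, and one must prove a uniform geometric estimate showing that no choice of distant or degenerate free points can make the simplex volume large relative to the diameter to the power $m+2$. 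This amounts to a careful decomposition of $\Sigma^{m+2-k}$ into the near-diagonal region (where the $C^1$-graph estimate applies) and the far region (where crude volume bounds suffice), together with an interpolation-type argument ensuring the two regimes glue to give the sharp exponent $m + sp$ in the Slobodeckij kernel.
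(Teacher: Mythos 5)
Your overall skeleton (localize to graphs, relate $\DC$ to the oscillation of the graphing function, land on a Gagliardo-type seminorm with exponent $m+sp$) matches the paper's strategy, but both directions are missing their key mechanism, and in the ``only if'' direction the gap is substantive. The energy $\E_p^k$ only sees point values of $f$ through the simplex volumes, never $Df$ itself, so you cannot directly bound the double integral $\int\int |Df(u)-Df(v)|^p|u-v|^{-m-sp}$ from above by the energy as you claim. The paper instead extracts the \emph{second difference} $f(y+w_1)-2f(y)+f(y-w_1)$: after restricting the inner variables to a set $\Omega^k_{w_1}$ of comparable, nondegenerate configurations and choosing the free vertices $w_k,\dots,w_{m+1}$ as a scaled orthonormal frame of an orthogonal complement (this part you do anticipate), it symmetrizes by averaging the integral with its image under $w_1\mapsto -w_1$ and applies the triangle inequality; a wedge-product computation then bounds the energy from below by
\begin{displaymath}
  c\int\int \frac{|f(y+w_1)-2f(y)+f(y-w_1)|^p}{|w_1|^{2p-m(k-2)}}\,dw_1\,dy ,
\end{displaymath}
which is the \emph{Besov} second-difference seminorm of order $1+s$; one then needs the equivalence $B^{1+s,p}=W^{1+s,p}$ (Theorem~\ref{thm:equiv-norms}) and the extension theorem to conclude. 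Without the reflection trick and the second-difference characterization, your reduction to a first-difference seminorm of $Df$ does not go through: first differences of $f$ alone cannot detect regularity above order one.

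For the ``if'' direction your worry about taming the supremum is legitimate, but the proposed near/far decomposition of $\Sigma^{m+2-k}$ with an unspecified ``interpolation-type argument'' is not a proof. The paper's mechanism is a single uniform geometric estimate via Jones' $\beta$-numbers (Lemma~\ref{lem:dc-beta}): $\HM^{m+1}(\simp T_{m+2})\le C\,\beta(x_0,\diam T_{m+2})\,(\diam T_{m+2})^{m+1}$ for \emph{every} configuration of the free points, so the supremum collapses to $\sup_{r}\beta(x_0,r)/r$, which is then controlled by $\sup_z|f(z)-f(x)-Df(x)(z-x)|$ and, via Sobolev--Morrey, by $\bigl(\int|Df(t)-Df(x)|^p\,dt\bigr)^{1/p}$; integrating out the remaining variables yields exactly the $W^{s,p}$ seminorm of $Df$. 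You would need to supply an estimate of this strength rather than a regime-splitting heuristic. (Minor point: $k=m+2$ is not excluded from the theorem; it is simply the case in which the supremum is vacuous.)
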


Here $W^{s,p}$ stands for the Sobolev-Slobodeckij spaces. For a definition of
these spaces, some basic properties, and references see
Section~\ref{sec:SlobSpaces}.

Note, that a classification of all finite energy objects for $\E^1_p$ for $p \in
[1,\infty]$ was already achieved in~\cite{KStvdM2011} and \cite{Gonzalez1999} --
essentially these are the embedded $W^{2,p}$ submanifolds. So we now have a
complete classification of $C^1$ manifolds with finite energy for all
intermediate integral Menger curvatures.

As an immediate consequence of Theorem~\ref{thm:MainTheorem} and the results
in~\cite{Kolasinski2011}, one gets for the integral Menger curvature

\begin{cor}
  \label{cor:AdmissibleSets}
  Let $m,n \in \N$, $p \in \R$ satisfy $m < n$ and $p > m(m+2)$. Furthermore,
  let $\Sigma$ be an admissible compact set in the sense
  of~\cite{Kolasinski2011}. Then $\E_p(\Sigma)$ is finite if and only if
  $\Sigma$ can locally be represented as the graph of some function belonging to
  the Sobolev-Slobodeckij space $W^{1+s,p}(\R^m,\R^{n-m})$, where $s = 1 -
  \frac{m(m+1)}p \in (0,1)$.
\end{cor}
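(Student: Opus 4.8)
The plan is to deduce the Corollary directly from Theorem~\ref{thm:MainTheorem} applied with the single parameter choice $k = m+2$. Indeed $\E_p = \E_p^{m+2}$ by definition, and for $k = m+2$ the exponent appearing in the Main Theorem becomes $s = 1 - \frac{m(k-1)}{p} = 1 - \frac{m(m+1)}{p}$, which is exactly the value claimed in the Corollary. The only discrepancy between the two statements is that Theorem~\ref{thm:MainTheorem} takes as hypothesis a compact $C^1$ manifold, whereas here we start merely from an admissible compact set in the sense of~\cite{Kolasinski2011}. Hence the whole task reduces to establishing, on each side of the equivalence, that the object under consideration is in fact a compact embedded $C^1$ submanifold; once this is known, both implications follow verbatim from the Main Theorem (whose hypothesis $p > m(k-1) = m(m+1)$ is implied by $p > m(m+2)$).

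For the forward implication I would argue as follows. Assume $\Sigma$ is admissible and $\E_p(\Sigma) < \infty$. The threshold $p > m(m+2)$ is precisely the range in which the regularity theory developed in~\cite{Kolasinski2011} applies: finiteness of the integral Menger curvature $\E_p$ on an admissible set forces $\Sigma$ to be a compact embedded $C^1$ (indeed $C^{1,\tau}$, with $\tau = 1 - \frac{m(m+2)}{p}$) manifold. With this in hand, Theorem~\ref{thm:MainTheorem} with $k = m+2$ yields the desired local representation of $\Sigma$ as a graph of a $W^{1+s,p}(\R^m,\R^{n-m})$ function.

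For the reverse implication, suppose $\Sigma$ is locally the graph of $W^{1+s,p}$ functions. The key observation is that the condition $p > m(m+2)$ is equivalent to $s > \frac{m}{p}$, i.e. to $1 + s - \frac{m}{p} > 1$; consequently the Sobolev--Slobodeckij embedding $W^{1+s,p}(\R^m) \hookrightarrow C^{1,\,s - m/p}$ holds (see Section~\ref{sec:SlobSpaces}), and the Hölder exponent $s - m/p$ coincides with the regularity exponent $\tau$ above. Thus each local graph is $C^1$, so $\Sigma$ is a compact embedded $C^1$ manifold and, in particular, admissible. The reverse direction of Theorem~\ref{thm:MainTheorem}, again with $k = m+2$, then gives $\E_p(\Sigma) = \E_p^{m+2}(\Sigma) < \infty$.

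The main obstacle is not any new estimate but the correct bookkeeping of the regularity statements borrowed from~\cite{Kolasinski2011}: one must verify that the notion of admissibility used there, together with $\E_p(\Sigma) < \infty$, genuinely upgrades $\Sigma$ to a $C^1$ manifold in the stated range of $p$, and that the numerology of the two thresholds $p > m(m+2)$ and $p > m(m+1)$ is compatible, so that the regularity result and the Sobolev embedding are simultaneously available. Everything else is a direct citation of the Main Theorem with $k = m+2$.
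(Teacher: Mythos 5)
Your proposal is correct and is exactly the argument the paper intends: the authors state the corollary as an immediate consequence of Theorem~\ref{thm:MainTheorem} (with $k=m+2$) combined with the $C^{1,\tau}$-regularity theorem of~\cite{Kolasinski2011} for admissible sets with $p>m(m+2)$, which is precisely what you spell out. Your additional check that $p>m(m+2)$ is equivalent to $s>m/p$, so that the Sobolev embedding into $C^1$ is available for the reverse implication, is the right piece of bookkeeping and matches the paper's numerology.
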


For curves, the classification of finite energy objects for $\mathcal M_p$,
$\mathcal I_p$ was achieved in \cite{Strzelecki2007,Bl-note}. It is a surprising
fact, that though $\E^p_3$, $\E^p_2$, $\E^p_1$ for curves look much weaker than
$\mathcal M_p$, $\mathcal I_p$, and $\mathcal U_p$, the corresponding energies
are bounded on exactly the same objects.

In ~\cite{KolSzum11} the optimal H\"older regularity that implies finiteness of
$\mathcal M_p$ or $\E_p$ was deduced. Note, that this result in any dimension
and for all intermediate energies can now be interpreted as a simple
consequences of Theorem~\ref{thm:MainTheorem} and classical embedding and
non-embedding theorems of Sobolev-Slobodeckij spaces.

\section{Sobolev-Slobodeckij spaces} \label{sec:SlobSpaces}

For the readers convenience we repeat some well known facts about Sobolev-Slobodeckij spaces.

\begin{defin}[\!\!\cite{Adams75}, Chapter VII]
  \label{def:SS-space}
  Let $k \in \N$, $s \in (0,1)$, $p \ge 1$ and let $\Omega$ be an open subset of
  $\R^m$ with smooth boundary. We say that $u \in L^p(\Omega)$ belongs to the
  \emph{Sobolev-Slobodeckij space} $W^{k+s,p}(\Omega)$ if
  \begin{displaymath}
    \|u\|_{W^{k+s,p}(\Omega)} = \left(
      \|u\|_{W^{k,p}(\Omega)}^p + \sum_{|\alpha| = k} \int_{\Omega} \int_{\Omega}
      \frac{|D^{\alpha}u(x) - D^{\alpha}u(y)|^p}{|x-y|^{m+sp}\ dy\ dx}
    \right)^{\frac 1p} < \infty \,.
  \end{displaymath}
\end{defin}

When we show that boundedness of $\E^{k}_p$ implies that the submanifold was of class 
$W^{1+s,p}$, we will use a different but equivalent norm on these spaces due to Besov:

\begin{defin}[\!\!\cite{Adams75}, 7.67]
  \label{def:besov-norms}
  Let $k \in \N$, $s \in (0,1)$, $p \ge 1$ and let $\Omega$ be an open subset of
  $\R^m$ with smooth boundary. For $x \in \Omega$, we set
  \begin{displaymath}
    \Omega_x = \{ y \in \Omega : \tfrac 12 (x+y) \in \Omega \} \,.
  \end{displaymath}
  For $u \in W^{k,p}(\Omega)$ we say that $u \in B^{k+s,p}(\Omega)$ if
  \begin{displaymath}
    \|u\|_{B^{k+s,p}(\Omega)} = \left(
      \|u\|_{W^{k,p}(\Omega)}^p + \sum_{|\alpha| = k} \int_{\Omega} \int_{\Omega_x} 
      \frac{|D^{\alpha}u(x) - 2D^{\alpha}u(\tfrac 12 (x+y)) + D^{\alpha}u(y)|^p}{|x-y|^{m+sp}} \ dy\ dx
    \right)^{\frac 1p}
  \end{displaymath}
  is finite.
\end{defin}

\begin{thm} [cf. \cite{Triebel1995}, Theorem~2.5.1]
  \label{thm:equiv-norms}
  Let $k \in \N$, $s \in (0,1)$, $p \ge 1$ and let $\Omega$ be an open subset of
  $\R^m$ with smooth boundary.  Then we have $W^{k+s,p}(\Omega)=
  B^{k+s,p}(\Omega)$ and the norms $\|\cdot\|_{W^{k+s,p}(\Omega)}$ and
  $\|\cdot\|_{B^{k+s,p}(\Omega)}$ are equivalent. Moreover, for $\sigma \in
  (0,2)$ the norm $\|\cdot\|_{B^{\sigma,p}(\Omega)}$ is also equivalent to the
  following norm
  \begin{displaymath}
    \|u\| = \left(
      \|u\|_{L^p(\Omega)}^p + \int_{\Omega} \int_{\Omega_x} 
      \frac{|u(x) - 2u(\tfrac 12 (x+y)) + u(y)|^p}{|x-y|^{m+\sigma p}} \ dy\ dx
    \right)^{\frac 1p} < \infty \,.
  \end{displaymath}
\end{thm}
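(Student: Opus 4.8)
The plan is to reduce the assertion to the whole space $\R^m$, rewrite each of the three seminorms appearing in the statement as a weighted integral of an (averaged) modulus of smoothness, and then invoke the classical comparison between the first and the second modulus of smoothness. First I would dispose of the domain. Since $\partial\Omega$ is smooth, I fix a bounded linear extension operator $E$ mapping $W^{j,p}(\Omega)$ into $W^{j,p}(\R^m)$ simultaneously for the finitely many integer orders $j \le k+1$ that occur. Restriction $\R^m \to \Omega$ is trivially norm non-increasing for all three candidate seminorms, while $E$ supplies the reverse inequality; hence it suffices to prove the three norms pairwise equivalent on $\R^m$, where translations are available and the analysis is cleanest.

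On $\R^m$, writing $y = x + h$ and integrating in $x$ first, the Gagliardo integral of a function $f$ becomes
\[
  \int_{\R^m}\int_{\R^m}\frac{|f(x)-f(y)|^p}{|x-y|^{m+sp}}\,dy\,dx
  = \int_{\R^m}\frac{\|f(\cdot+h)-f\|_{L^p}^p}{|h|^{m+sp}}\,dh ,
\]
and passing to polar coordinates $h = r\theta$ collapses this, up to constants, to $\int_0^\infty r^{-sp}\,\omega_1(f,r)_p^p\,\tfrac{dr}{r}$, where $\omega_1$ denotes the first-order $L^p$ modulus of smoothness (the averaged and the supremum versions being comparable by a standard covering argument). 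Performing the same computation on the symmetric second difference appearing in Definitions~\ref{def:besov-norms} and~\ref{thm:equiv-norms}, i.e. $\Delta_h^2 f = f(\cdot+h) - 2f(\cdot+\tfrac h2) + f$, turns the Besov-type integral into $\int_0^\infty r^{-sp}\,\omega_2(f,r)_p^p\,\tfrac{dr}{r}$ with the second-order modulus $\omega_2$.

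It then remains to compare the $\omega_1$- and $\omega_2$-weighted integrals. The elementary bound $\omega_2(f,t)_p \le 2\,\omega_1(f,t)_p$ immediately dominates the second-difference seminorm by the first-difference one. For the reverse direction I would use the \emph{Marchaud inequality} $\omega_1(f,t)_p \le C\,t\int_t^\infty u^{-2}\,\omega_2(f,u)_p\,du$; inserting it into $\int_0^\infty (t^{-s}\omega_1(f,t)_p)^p\,\tfrac{dt}{t}$ and applying a Hardy inequality — which converges precisely because the modulus order strictly exceeds the smoothness exponent — bounds it by $\int_0^\infty (u^{-s}\omega_2(f,u)_p)^p\,\tfrac{du}{u}$. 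Applying this to each $D^\alpha u$ with $|\alpha| = k$ yields $W^{k+s,p}(\R^m) = B^{k+s,p}(\R^m)$ for $s\in(0,1)$, and transfers back to $\Omega$ via Step~1. For the last assertion I repeat the computation for $u$ itself ($k=0$) with weight $t^{-\sigma p}$: since $\omega_2$ detects smoothness up to order $2$, the Hardy step now goes through for the full range $\sigma\in(0,2)$, giving equivalence with $\|\cdot\|_{B^{\sigma,p}}$; for $\sigma\in(0,1)$ this recovers the first two norms, while for $\sigma\in[1,2)$ it is the genuinely new content, first differences being insufficient there.

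The main obstacle is the direction controlling the lower-order modulus by the higher-order one, resting on the Marchaud inequality together with the convergence bookkeeping in the Hardy step; I expect the verification that the averaged and supremum moduli are comparable, and the careful tracking of the admissible range of the smoothness exponent against the modulus order, to be the most delicate points, with the extension across $\partial\Omega$ a secondary technical ingredient.
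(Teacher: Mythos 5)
This statement is not proved in the paper at all: it is quoted from the literature (Triebel, Theorem~2.5.1) and used as a black box, so there is no internal proof to compare against. Your sketch follows the standard approximation-theoretic route --- rewrite the Gagliardo and symmetric-second-difference seminorms as $\int_0^\infty \bigl(t^{-s}\omega_j(f,t)_p\bigr)^p\,\tfrac{dt}{t}$ for $j=1,2$, dominate $\omega_2$ by $\omega_1$ trivially, and recover $\omega_1$ from $\omega_2$ via the Marchaud inequality plus Hardy --- and that core is sound, including the correct observation that the Hardy step for $\omega_1$ forces $s<1$ while the pure $\omega_2$ characterization survives for all $\sigma\in(0,2)$.

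Two places in the write-up have genuine gaps. First, the reduction to $\R^m$: an extension operator bounded on $W^{j,p}(\Omega)$ for the \emph{integer} orders $j\le k+1$ does not automatically satisfy $\|Eu\|_{W^{k+s,p}(\R^m)}\le C\|u\|_{W^{k+s,p}(\Omega)}$, nor the analogous bound for the intrinsic second-difference seminorm on $\Omega$. Integer-order boundedness gives boundedness on the real interpolation spaces, and identifying those with the intrinsically defined Gagliardo/Besov norms on a domain is essentially the content of the theorem you are proving, so as written the reduction is circular. You must either verify directly that the Stein (or higher-order reflection) extension is bounded for the intrinsic fractional seminorms on a smooth domain, or run the Marchaud/Hardy argument intrinsically on $\Omega$ --- note that the restriction $y\in\Omega_x$ in Definition~\ref{def:besov-norms} is precisely what keeps the midpoint inside $\Omega$ and makes the iterated-difference identities behind Marchaud legal there. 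Second, the ``Moreover'' clause for $\sigma\in[1,2)$ is not just the same computation with a different weight: for $\sigma=1+s$ the $B^{\sigma,p}$ norm involves second differences of $D^\alpha u$ with $|\alpha|=1$, so you need the classical two-sided comparison between $\omega_2(u,t)_p$ and $t\,\omega_1(Du,t)_p$ (the easy inequality $\omega_2(u,t)_p\le t\,\omega_1(Du,t)_p$ together with its converse integral form), which Marchaud alone does not supply. Both gaps are fixable with standard material, but they are exactly where the work of the theorem lives.
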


Furthermore, we will use the following extension lemma

\begin{thm}[cf. \!\!\cite{Triebel1983}, Theorem p. 201] \label{thm:extension}
  Let $k \in \N$, $s \in (0,1)$, $p \ge 1$ and let $\Omega$ be an open subset of
  $\R^m$ with smooth boundary. Then $u \in W^{k+s,p}(\Omega)$ if and 
  only if it is the restriction of a function $ \tilde u \in W^{k+s,p}(\mathbb R^m)$
  onto $\Omega$. 
\end{thm}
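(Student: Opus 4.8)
The plan is to prove the two implications separately, the first being trivial and the second being the actual content. For the ``if'' direction, if $u = \tilde u|_\Omega$ for some $\tilde u \in W^{k+s,p}(\R^m)$, then every term in the norm $\|u\|_{W^{k+s,p}(\Omega)}$ is an integral over a subset of the corresponding domain for $\tilde u$ (since $\Omega \subseteq \R^m$ and $\Omega \times \Omega \subseteq \R^m \times \R^m$), whence $\|u\|_{W^{k+s,p}(\Omega)} \le \|\tilde u\|_{W^{k+s,p}(\R^m)} < \infty$. The substance of the theorem is therefore the converse: the construction of a bounded linear extension operator $E \colon W^{k+s,p}(\Omega) \to W^{k+s,p}(\R^m)$ with $(Eu)|_\Omega = u$.

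First I would reduce to the model case of a half-space. Since $\partial\Omega$ is smooth, I would cover $\overline\Omega$ by a locally finite family of open sets $U_0, U_1, U_2, \dots$, where $U_0 \Subset \Omega$ and each $U_j$ ($j \ge 1$) carries a smooth diffeomorphism $\Phi_j$ flattening $\partial\Omega \cap U_j$ onto a piece of the hyperplane $\{x_m = 0\}$ and mapping $\Omega \cap U_j$ into the half-space $\R^m_+ = \{x_m > 0\}$ (in the application $\Omega$ is bounded, so finitely many charts suffice). I would then fix a subordinate smooth partition of unity. Because the spaces $W^{k+s,p}$ are invariant, with equivalent norms, under multiplication by smooth compactly supported functions and under composition with smooth diffeomorphisms having bounded derivatives up to order $k+1$, it suffices to extend each localized piece $(\varphi_j u)\circ\Phi_j^{-1}$, defined near $0$ in $\R^m_+$, to all of $\R^m$, and afterwards to transport and recombine the extensions via the $\Phi_j$ and the partition of unity.

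For the half-space I would use a higher-order reflection of Lichtenstein--Hestenes type. For $x = (x', x_m)$ with $x_m < 0$ I set
\begin{displaymath}
  Eu(x', x_m) = \sum_{j=1}^{k+1} c_j\, u(x', -\lambda_j x_m),
\end{displaymath}
with fixed distinct $\lambda_j > 0$ and coefficients $c_j$ solving the Vandermonde system $\sum_j c_j (-\lambda_j)^i = 1$ for $i = 0, 1, \dots, k$. This choice forces $Eu$ and all its derivatives up to order $k$ to agree across $\{x_m = 0\}$, so that $Eu \in W^{k,p}(\R^m)$ with $\|Eu\|_{W^{k,p}(\R^m)} \lesssim \|u\|_{W^{k,p}(\R^m_+)}$ by a change of variables in each summand; moreover $D^\alpha(Eu)$ is, on the lower half-space, a finite linear combination of dilated copies $(D^\alpha u)(x', -\lambda_j x_m)$ for each $|\alpha| = k$.

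It remains to control the fractional part of the norm, and this is the main obstacle. Here I would invoke the second-difference characterization of Theorem~\ref{thm:equiv-norms}, applied with $\sigma = s \in (0,1) \subset (0,2)$ to each top-order derivative $D^\alpha(Eu)$, since the modulus of second differences transforms cleanly under the affine dilations $x_m \mapsto -\lambda_j x_m$. The delicate estimates are exactly those for pairs $x, y$ lying on opposite sides of $\{x_m = 0\}$: one must bound the second-difference integrand of $D^\alpha(Eu)$ across the boundary by a finite sum of such integrands for $D^\alpha u$ over $\R^m_+$, using that reflection and dilation distort distances only by factors comparable to $\lambda_j$, so that the geometric weight $|x-y|^{-(m+sp)}$ is controlled by the corresponding reflected weight. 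Summing the finitely many reflected contributions and integrating yields
\begin{displaymath}
  \|Eu\|_{W^{k+s,p}(\R^m)} \le C\, \|u\|_{W^{k+s,p}(\R^m_+)}
\end{displaymath}
with $C = C(m,k,s,p)$. Transporting this estimate back through the charts $\Phi_j$ and recombining via the partition of unity produces the global extension, which completes the nontrivial implication.
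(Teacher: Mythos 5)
This theorem is quoted from Triebel's book; the paper contains no proof of it, so there is no internal argument to compare yours against and I can only judge your outline on its own terms. What you propose is the classical Lichtenstein--Hestenes route: the trivial restriction direction, localization and boundary flattening via a finite atlas and subordinate partition of unity, and a higher-order reflection across $\{x_m=0\}$ with coefficients fixed by the Vandermonde system $\sum_j c_j(-\lambda_j)^i=1$, $i=0,\dots,k$. This is correct and standard; it differs from Triebel's own construction (a single operator extending a whole scale of spaces at once, built with Fourier-analytic tools), but for one fixed $k+s$ and $p\ge 1$ the elementary reflection argument is entirely adequate and more transparent.

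Two remarks on the step you yourself flag as delicate. First, the detour through the second-difference norm of Theorem~\ref{thm:equiv-norms} is unnecessary and actually makes the boundary estimate harder, since you would also have to track on which side of the hyperplane the midpoint $\tfrac12(x+y)$ falls; as $s\in(0,1)$, the first-difference Gagliardo seminorm of Definition~\ref{def:SS-space} applied to the top-order derivatives is the natural object. Second, the mixed region $x_m>0>y_m$ is resolved by the very Vandermonde identity you already imposed: writing $R_jy=(y',-\lambda_jy_m)$ and using $\sum_j c_j(-\lambda_j)^{\alpha_m}=1$ for $|\alpha|=k$, one has
\begin{displaymath}
  D^\alpha(Eu)(x)-D^\alpha(Eu)(y)
  =\sum_{j=1}^{k+1}c_j(-\lambda_j)^{\alpha_m}\bigl(D^\alpha u(x)-D^\alpha u(R_jy)\bigr),
\end{displaymath}
and since $|y_m|\le|x-y|$ in this region, $|x-R_jy|\le(2+\lambda_j)|x-y|$, so the weight $|x-y|^{-(m+sp)}$ is dominated by a constant times $|x-R_jy|^{-(m+sp)}$ and a change of variables in $y$ reduces each summand to the seminorm of $u$ over the half-space. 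With that made explicit --- and with the localized pieces extended by zero to the whole half-space before reflecting, so that the reflected points remain in the domain of definition --- your outline is complete, modulo the standard but not entirely free facts that $W^{k+s,p}$ is stable under multiplication by $C^\infty_c$ functions and under composition with smooth diffeomorphisms.
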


Apart from this, we will need the following
well known embedding theorem

\begin{thm}[\!\!\cite{Adams75}, Theorem~7.57]
  \label{thm:SS-embed}
  Let $s > 0$ and $1 < p < m$. If $n
  < (s-j)p$ for some nonnegative integer $j$, then $W^{s,p}(\Omega) \subset
  C^j_{loc}(\mathbb R^m)$. 
\end{thm}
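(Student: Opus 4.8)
The plan is to reduce the statement to its essential analytic core -- a Morrey-type inequality for fractional spaces -- and to obtain the higher-order and large-exponent cases by standard reductions. First I would observe that it suffices to treat $j=0$: if $D^\alpha u$ is continuous for every multi-index $\alpha$ with $|\alpha|=j$, then $u$ possesses continuous classical derivatives up to order $j$, hence $u\in C^j_{loc}$. Since $|\alpha|=j$ gives $D^\alpha u\in W^{s-j,p}$, and the hypothesis for a domain $\Omega\subseteq\R^m$ reads $(s-j)p>m$, everything comes down to proving that $\sigma p>m$ implies $W^{\sigma,p}_{loc}\subset C^0$. Because $p<m$ forces $\sigma=s-j>m/p>1$, I would further lower $\sigma$ into the range $(0,1)$ by iterating the subcritical Gagliardo--Sobolev embedding $W^{\sigma,p}(\R^m)\hookrightarrow W^{\sigma-1,p^*}(\R^m)$ with $1/p^*=1/p-1/m$. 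The crucial point is that each such step leaves the quantity $\sigma-m/p$ unchanged, so after finitely many steps one reaches some $\sigma'\in(0,1)$ with $\sigma'-m/p'=\sigma-m/p>0$, i.e. $\sigma'p'>m$.

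The heart of the argument is then the case $0<\sigma<1$, $\sigma p>m$, where I would prove directly that $u$ is H\"older continuous with exponent $\sigma-m/p$, with seminorm controlled by the Gagliardo seminorm of Definition~\ref{def:SS-space}. Fix $x$, put $r=|x-y|$ and $B_k=B_{r/2^k}(x)$, and write $u_{B_k}=|B_k|^{-1}\int_{B_k}u$. Using that $u_{B_k}\to u(x)$ at Lebesgue points, I would telescope $u(x)-u_{B_0}=\sum_{k\ge0}(u_{B_{k+1}}-u_{B_k})$ and bound each increment by a double average,
\[
|u_{B_{k+1}}-u_{B_k}|\le \frac{1}{|B_{k+1}|\,|B_k|}\int_{B_{k+1}}\int_{B_k}|u(z)-u(w)|\,dz\,dw .
\]
Applying H\"older's inequality in the exponent $p$ and inserting the Gagliardo kernel $|z-w|^{-(m+\sigma p)}$, at the cost of a factor $r_k^{m+\sigma p}$ (legitimate since $|z-w|\lesssim r_k:=r/2^k$ on $B_{k+1}\times B_k$), one obtains $|u_{B_{k+1}}-u_{B_k}|\lesssim r_k^{\sigma-m/p}[u]_{W^{\sigma,p}(B_0)}$. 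As $\sigma-m/p>0$, the geometric series $\sum_k (r/2^k)^{\sigma-m/p}$ converges and gives $|u(x)-u_{B_0}|\lesssim r^{\sigma-m/p}[u]_{W^{\sigma,p}(B_0)}$; comparing the two averages over $B_r(x)$ and $B_r(y)$ in the same way yields $|u(x)-u(y)|\lesssim |x-y|^{\sigma-m/p}[u]_{W^{\sigma,p}}$.

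To finish, this H\"older estimate together with the crude bound $|u_{B_0}|\le |B_0|^{-1/p}\|u\|_{L^p}$ produces a local $L^\infty$ bound, so $u\in C^{0,\sigma-m/p}_{loc}\subset C^0_{loc}$; feeding this back through the two reductions delivers $u\in C^j_{loc}$. The main obstacle I anticipate is not the dyadic estimate itself, which is routine once set up, but the bookkeeping in the reduction step: one must make the chain of subcritical embeddings rigorous, keep track of the exponents (the integrability $p_i$ stays finite only while the space remains subcritical), and handle the borderline case in which some intermediate index $\sigma'$ is an integer, where the classical integer-order Sobolev embedding must be invoked in place of the fractional one. All of these ingredients are classical, which is why the statement is quoted from~\cite{Adams75}.
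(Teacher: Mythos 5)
This statement is not proved in the paper at all: it is quoted verbatim as Theorem~7.57 of Adams' book, so there is no in-paper argument to compare yours against. Judged on its own, your proposal is a correct and essentially standard proof of the fractional Morrey embedding: the reduction to $j=0$, the exponent-preserving chain of subcritical embeddings $W^{\sigma,p}\hookrightarrow W^{\sigma-1,p^*}$, and the dyadic telescoping of ball averages with H\"older's inequality and the Gagliardo kernel are all sound, and the final exponent $\sigma-m/p$ comes out right. (Incidentally, Adams' own proof runs through trace and interpolation machinery for Besov spaces, so your elementary Campanato-style argument is arguably more self-contained than the cited source.) Two small points deserve attention. First, the hypothesis as printed reads $n<(s-j)p$ while the domain lives in $\R^m$; you silently and correctly read this as $(s-j)p>m$, which is the intended meaning (the mismatch is an artifact of transplanting Adams' notation, where the dimension is $n$). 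Second, your claim that the iteration always lands at some $\sigma'\in(0,1)$ is not quite accurate: since $\sigma_i-m/p_i$ is the invariant, the integrability index $p_i$ can pass above $m$ while $\sigma_i$ is still at least $1$ (for instance $m=2$, $p=3/2$, $s=10$), at which point the subcritical embedding is no longer available; there the classical supercritical Morrey embedding $W^{1,q}\hookrightarrow C^{0,1-m/q}$ for $q>m$ finishes the job directly. You flag this bookkeeping issue yourself, together with the integer-$\sigma'$ borderline, but a complete write-up would have to split into these cases explicitly rather than leave them as anticipated obstacles.
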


\section{Being a $W^{1+s,p}$ submanifold implies $\E_p^k < \infty$}

Since we will have to work with balls of different dimensions in this article,
let us introduce the 
symbol $B^l(r,x)$ for the $l$-dimensional open ball in $\mathbb R^l$.

In this section we are proving the following half of our main theorem
\begin{thm}
  \label{thm:Boundedness}
  Fix some natural number $2 \le k \le m+2$. Let $p > m(k-1)$ and $s = 1 -
  \frac{m(k-1)}p \in (0,1)$. Let $\Sigma \subseteq \R^n$ be a compact
  $m$-dimensional manifold, with local graph representation in the
  Sobolev-Slobodeckij space $W^{1+s,p}(\R^m,\R^{n-m})$. Then $\E_p^k(\Sigma)$
  is finite.
\end{thm}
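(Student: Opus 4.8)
The plan is to reduce everything to a local estimate. Since $\Sigma$ is compact and $C^1$ with local $W^{1+s,p}$ graph representations, I would first use a finite atlas so that it suffices to bound the contribution to $\E_p^k$ coming from $k$-tuples $(x_0,\dots,x_{k-1})$ that all lie in a single small chart, because the ``far apart'' contributions (where at least two of the points are separated by a fixed distance) are trivially bounded: there $\diam$ is bounded below, while $\HM^{m+1}(\simp(\cdot))$ is bounded above on the compact set, so $\DC$ is bounded and the outer integral is over a finite-measure set. Thus I would fix one chart and assume $\Sigma$ is locally the graph of $f \in W^{1+s,p}(\Omega,\R^{n-m})$ over an open $\Omega \subseteq \R^m$, parametrising points as $x_i = (u_i, f(u_i))$ with $u_i \in \Omega$. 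Pulling the integral back through this parametrisation introduces the bounded Jacobian factors $\sqrt{\det(\opid + Df^T Df)}$, so the whole problem becomes estimating $\int_{\Omega^k} \sup_{\cdots} \DC^p \, du_0 \cdots du_{k-1}$.

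The heart of the matter is a pointwise bound on $\DC(x_0,\dots,x_{k-1})$ when only $k \le m+1$ points are chosen and the remaining points $x_k,\dots,x_{m+1}$ are taken as a supremum over $\Sigma$. The key geometric observation I would exploit is that $\HM^{m+1}(\simp(x_0,\dots,x_{m+1}))$ factors, up to dimensional constants, as the product of the $(k-1)$-dimensional volume of the simplex $\simp(x_0,\dots,x_{k-1})$ times a quantity measuring how far the remaining points stick out transversally. Since the supremised points range over the bounded set $\Sigma$, their transversal extent is controlled by a constant, and the diameter of the full $(m+2)$-point set is comparable to the diameter of $\Sigma$-sized pieces; the upshot is that after the supremum is taken, $\DC(x_0,\dots,x_{k-1})$ is controlled by the $(k-1)$-volume of the small simplex $\simp(x_0,\dots,x_{k-1})$ divided by an appropriate power of its own diameter. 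In graph coordinates this small $(k-1)$-dimensional volume is in turn governed by the second-order oscillation of $f$: writing the vertices as graph points, the volume vanishes to the appropriate order when $f$ is affine, so it is estimated by finite differences of $Df$. This is the step I expect to be the main obstacle, since it requires a clean multilinear-algebra estimate expressing $\HM^{k-1}(\simp(x_0,\dots,x_{k-1}))$ in terms of the Gram determinant of the difference vectors and then extracting the genuinely ``curvature'' part (the deviation of $f$ from its tangent plane) rather than the size of the first derivatives.

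Once that pointwise bound is in place, the remaining work is a Sobolev-Slobodeckij integral estimate, and here I would invoke the Besov-type characterisation from Theorem~\ref{thm:equiv-norms}. The idea is that the $(k-1)$-volume of the graph simplex, divided by the diameter to the power making $\DC$ scale like a curvature, is comparable to a second difference of $Df$ of the form $|Df(u) - 2Df(\tfrac12(u+v)) + Df(v)|$ divided by $|u-v|$, integrated against the remaining $k-2$ variables. After choosing $s = 1 - \frac{m(k-1)}{p}$ so that $m + sp = m + p - m(k-1)$, and after integrating out the ``extra'' variables $u_2,\dots,u_{k-1}$ over the bounded chart (which only contributes finite-measure factors), the integral $\int_\Omega \int_\Omega \frac{|Df(u) - 2Df(\tfrac12(u+v)) + Df(v)|^p}{|u-v|^{m+sp}} \, dv \, du$ emerges, which is exactly the Besov seminorm $[Df]_{B^{s,p}}^p$ that is finite precisely because $f \in W^{1+s,p}$. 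Thus I would conclude by matching exponents: verifying that the scaling power $m+2$ in the definition of $\DC$, the codimension bookkeeping, and the choice of $s$ conspire so that the curvature integral reduces to a finite Besov seminorm of $Df$.

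The likely technical obstacle, beyond the linear-algebra volume estimate, is the bookkeeping of the supremum over the remaining $m+2-k$ points: one must verify that the supremum is in fact controlled uniformly and does not interact badly with the diameter in the denominator. I would handle this by separately treating the regime where the supremising points are close to the cluster $\{x_0,\dots,x_{k-1}\}$ (so that the whole configuration is small and the graph-curvature estimate applies with a favourable diameter) and the regime where they are far (so that the diameter is comparable to $\diam \Sigma$ and the quotient is trivially bounded), and showing that in both regimes the resulting bound is dominated by the same second-difference quantity integrated against finite-measure factors.
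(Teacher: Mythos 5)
Your localization step and your final exponent bookkeeping match the paper, but the central geometric estimate in your second paragraph does not work, and it is precisely the step you flagged as ``the main obstacle''. You propose to bound $\HM^{m+1}(\simp(x_0,\ldots,x_{m+1}))$ by $\HM^{k-1}(\simp(x_0,\ldots,x_{k-1}))$ times the (bounded) transversal extent of the supremised points, and then to claim that this $(k-1)$-volume ``vanishes to the appropriate order when $f$ is affine''. For $2\le k\le m+1$ that claim is false: $k$ points lying in an affine $m$-plane (e.g.\ on the graph of an affine $f$) generically span a nondegenerate $(k-1)$-simplex of volume comparable to $\diam(T_k)^{k-1}$. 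Affine dependence -- and hence the ``curvature'' smallness -- only appears once all $m+2$ points are in play. Consequently your pointwise bound degenerates to $\DC_k(T_k)\lesssim \diam(T_k)^{-1}$, and $\int_{\Sigma^k_{<\lambda}}\diam(T_k)^{-p}\,d\mu_k$ diverges exactly in the range $p>m(k-1)$ (the measure of $\{\diam(T_k)\sim 2^{-j}\}$ is $\sim 2^{-jm(k-1)}$). Indeed your bound would already fail to show that an affine plane has zero energy. The paper's route keeps the full $(m+1)$-volume and bounds it via the Jones $\beta$-numbers: all $m+2$ points of a configuration of diameter $d$ lie within $\beta(x_0,d)\,d$ of a common $m$-plane, so the simplex sits in a slab and $\DC(T_{m+2})\le C\,\beta(x_0,d)/d$ (Lemma~\ref{lem:dc-beta}). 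The supremum over the remaining points then becomes $\sup_{\diam(T_k)\le r\le 4\rho}\beta(x_0,r)/r$, i.e.\ a multi-scale flatness statement, and it is $\beta(x,r)$ -- not any partial simplex volume -- that is controlled by the Taylor remainder $|f(z)-f(x)-Df(x)(z-x)|$ and hence, via Morrey's embedding, by $r^{-m/p}\bigl(\int_{\Disc(x,5r)}|Df(t)-Df(x)|^p\,dt\bigr)^{1/p}$. The supremum over scales is then absorbed by converting it into an integral $\int_{|y-x|}^{\infty}(\cdots)\,\tau^{-m-p-1}\,d\tau$ before applying Fubini.

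A secondary, smaller discrepancy: for this direction the paper lands on the first-difference Gagliardo seminorm $\int\!\!\int |Df(t)-Df(x)|^p|t-x|^{-(m+sp)}$, not the second-difference Besov form $|Df(u)-2Df(\tfrac12(u+v))+Df(v)|$ you invoke. The second-difference characterization is the natural tool for the converse implication (extracting a lower bound on the energy from $f(y+w)-2f(y)+f(y-w)$ after symmetrizing $w\mapsto -w$); for the upper bound here the mean oscillation of $Df$ is what comes out of the Morrey estimate. This is not a fatal issue since the two norms are equivalent, but it is a sign that the mechanism you have in mind belongs to the other half of the theorem. To repair the proof you need to replace your factorization by a flatness estimate at the scale of the \emph{full} configuration, uniform over the position of the supremised points, which is exactly what the $\beta$-number lemma provides.
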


Throughout this section we use the symbol $T_k$ to denote a $k$-tuple
$T=(w_0,\ldots,w_{k-1})$ of $k$ points in $\R^n$. Using this notation we can
write
\begin{displaymath}
  \DC(T_{m+2}) = \frac{\HM^{m+1}(\simp T_{m+2})}{(\diam T_{m+2})^{m+2}} \,.
\end{displaymath}
We define the measure
\begin{displaymath}
  \mu_k = \underbrace{\HM^m \otimes \cdots \otimes \HM^m}_{k \text{ times}}
\end{displaymath}
and set
\begin{displaymath}
  \DC_k(x_0,\ldots,x_{k-1}) = \sup_{x_k,\ldots,x_{m+1} \in \Sigma} \DC(x_0,\ldots,x_{m+1})
\end{displaymath}
for $k\in\{1,\ldots, m+1\}$ and
\begin{displaymath}
  \DC_{m+2}(x_0,\ldots,x_{k-1}) = \DC(x_0,\ldots,x_{m+1}) \,.
\end{displaymath}
Now, we can write
\begin{displaymath}
  \E_p^k(\Sigma) = \int_{\Sigma^k} \DC_k(T_k)^p\ d\mu_k(T_k) \,
\end{displaymath}
for all $k \in \{1, m+2\}.$

For any set $A \subseteq \R^n$ and any $\lambda > 0$ we define
\begin{align*}
  A^k_{\ge \lambda} &= \{(w_1,\ldots,w_k) \in A^k : \diam\{w_1,\ldots,w_k\} \ge \lambda \} \\
  \text{and} \quad
  A^k_{< \lambda} &= \{(w_1,\ldots,w_k) \in A^k : \diam\{w_1,\ldots,w_k\} < \lambda \} = A^k \setminus A^k_{\ge \lambda} \,.
\end{align*}

Let $o \in \Sigma$ and let $\rho > \lambda > 0$ be some numbers. We set
\begin{displaymath}
  \Sigma_o^{\rho} = \Sigma \cap \Ball(o,\rho)
  \quad \text{and} \quad
  \DC_{k,o,\rho}(w_0,\ldots,w_{k-1}) = \sup_{w_k,\ldots,w_{m+1} \in \Sigma_o^{\rho}} \DC(w_0,\ldots,w_{m+1})
\end{displaymath}
and introduce the local version of our energy
\begin{displaymath}
  \E_p^k(\Sigma,\rho,\lambda,o) = \int_{(\Sigma_o^{\rho})^k_{< \lambda}} \DC_{k,o,2\rho}(T_k)^p\ d\mu_k(T_k) \,.
\end{displaymath}

The proof of Theorem~\ref{thm:Boundedness} relies on the following two
lemmata, the proof of which we will postpone till the end of this section.
The first one tells us, that we only have to consider simplices with small
diameter.

\begin{lem}
  \label{lem:patches}
  For any $\rho > 0$ there exist $\lambda \in (0,\rho)$, $N \in \N$, an
  $N$-tuple of points $x_1$,\ldots,$x_N$ in $\Sigma$ and a constant $C =
  C(n,m)$ such that
  \begin{displaymath}
    \E_p^k(\Sigma) \le C(n,m) \HM^m(\Sigma)^k (\lambda^{-p} + \rho^{-p})
    + \sum_{i=1}^N \E_p^k(\Sigma,\rho,\lambda,x_i)
  \end{displaymath}
\end{lem}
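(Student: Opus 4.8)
The plan is to split the domain $\Sigma^k$ according to the diameter of the $k$-tuple and to control the two resulting pieces by completely different means. The only analytic input is an elementary volume bound: since $\simp(x_0,\ldots,x_{m+1})$ is the convex hull of $m+2$ points and hence lies in an affine subspace of dimension at most $m+1$, one has $\HM^{m+1}(\simp(x_0,\ldots,x_{m+1})) \le \frac{1}{(m+1)!}(\diam\{x_0,\ldots,x_{m+1}\})^{m+1}$, whence
\begin{equation*}
  \DC(x_0,\ldots,x_{m+1}) \le \frac{1}{(m+1)!}\,\frac{1}{\diam\{x_0,\ldots,x_{m+1}\}}\,.
\end{equation*}
In particular $\DC$ is uniformly small once the points are spread out, and the resulting dimensional constant is bounded by $1$, so it survives raising to the $p$-th power without deteriorating.

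For the large-diameter part I would integrate over $\Sigma^k_{\ge\lambda}$. For any such tuple the full $(m+2)$-point diameter dominates the diameter of $\{x_0,\ldots,x_{k-1}\}$, which is $\ge\lambda$, so the displayed estimate yields $\DC_k(T_k)\le\frac{1}{(m+1)!\,\lambda}$ uniformly; integrating the $p$-th power against $\mu_k$ (whose total mass is $\HM^m(\Sigma)^k$) gives a contribution bounded by $C(n,m)\lambda^{-p}\HM^m(\Sigma)^k$.

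The small-diameter part is where the work lies. I would fix $\lambda=\rho/2$, cover the compact set $\Sigma$ by finitely many balls $\Ball(x_i,\lambda)$, $i=1,\ldots,N$, and disjointify them into $V_i = (\Ball(x_i,\lambda)\cap\Sigma)\setminus\bigcup_{j<i}\Ball(x_j,\lambda)$, so that $\{V_i\}$ partitions $\Sigma$. A tuple $T_k=(w_0,\ldots,w_{k-1})$ with $\diam T_k<\lambda$ is assigned to the unique $i$ with $w_0\in V_i$; since $w_0\in\Ball(x_i,\lambda)$ and the remaining points are within $\lambda$ of $w_0$, the whole tuple lies in $\Ball(x_i,2\lambda)=\Ball(x_i,\rho)$, i.e. $T_k\in(\Sigma_{x_i}^{\rho})^k_{<\lambda}$. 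The key step is then to compare the global supremum $\DC_k$ with the local one $\DC_{k,x_i,2\rho}$: given competitor points $x_k,\ldots,x_{m+1}\in\Sigma$, either all of them lie in $\Ball(x_i,2\rho)$, so that $\DC(x_0,\ldots,x_{m+1})\le\DC_{k,x_i,2\rho}(T_k)$, or some $x_j$ lies outside $\Ball(x_i,2\rho)$, in which case $\diam\{x_0,\ldots,x_{m+1}\}\ge|x_j-x_i|-|x_i-w_0|>2\rho-\lambda>\rho$ and the displayed bound forces $\DC(x_0,\ldots,x_{m+1})<\frac{1}{(m+1)!\,\rho}$. Hence $\DC_k(T_k)^p\le\DC_{k,x_i,2\rho}(T_k)^p+C(n,m)\rho^{-p}$ on the piece $\{w_0\in V_i\}$.

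Summing over $i$ and using nonnegativity of the integrand finishes the argument: the main terms reassemble, after enlarging each domain of integration to the full $(\Sigma_{x_i}^{\rho})^k_{<\lambda}$, into $\sum_{i=1}^N\E_p^k(\Sigma,\rho,\lambda,x_i)$, while the error terms sum to at most $C(n,m)\rho^{-p}\sum_{i=1}^N\HM^m(V_i)\,\HM^m(\Sigma)^{k-1}=C(n,m)\rho^{-p}\HM^m(\Sigma)^k$, precisely because the $V_i$ partition $\Sigma$ and so $\sum_i\HM^m(V_i)=\HM^m(\Sigma)$. Combining this with the large-diameter bound gives the claim. The main obstacle is exactly the supremum mismatch handled above: $\DC_k$ is defined as a supremum over all of $\Sigma$, whereas the local energy only sees $\Sigma_{x_i}^{2\rho}$, so one must argue that contributions from far-away competitor points are automatically negligible through the $1/\diam$ decay. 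The disjointification into the $V_i$ is what keeps the accumulated error proportional to $\HM^m(\Sigma)^k$ rather than to the covering number $N$.
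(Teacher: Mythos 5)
Your proof is correct and follows essentially the same route as the paper's: split $\Sigma^k$ by diameter, use the elementary bound $\DC\le C/\diam$ to kill both the large-diameter tuples and the far-away competitor points, and localize the small-diameter part via a finite cover. The only (harmless) differences are that you fix $\lambda=\rho/2$ with an explicit disjointified cover where the paper invokes the Lebesgue number of a cover by $\rho$-balls, and your dichotomy $\DC_k(T_k)\le\max\{\DC_{k,x_i,2\rho}(T_k),\,C\rho^{-1}\}$ is in fact a slightly cleaner way to handle the supremum mismatch than the paper's sum of two suprema.
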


The second lemma tells us that in order to prove Theorem~\ref{thm:Boundedness}
it is enough to get some good estimates for the~Jones' $\beta$-numbers. 
Those are given by

\begin{defin}
  \label{def:beta-num}
  For $x \in \Sigma$ and $r>0$ we define the \emph{Jones' $\beta$-numbers}
  \begin{equation*}
    \beta(x,r):= \inf \left\{
      \frac{\sup_{y \in \Sigma \cap \Ball_(x,r)}\dist(y,H)}{r} : H \text{ an affine }m
      \text{-dimensional space containing }x
    \right\} \,.
  \end{equation*}
\end{defin}

\begin{lem}
  \label{lem:dc-beta}
  There exists a constant $C = C(m,n)$ such that for all $\Sigma \subseteq
  \R^n$ and $T_{m+2}=(x_0,\ldots,x_{m+1}) \in \Sigma^{m+2}$ we have
  \begin{displaymath}
    \HM^{m+1}(\simp T_{m+2}) \le C \beta(x_0,\diam(T_{m+2})) \diam(T_{m+2})^{m+1} 
  \end{displaymath}
  and consequently
  \begin{displaymath}
    \DC(T_{m+2}) \le C \frac{\beta(x_0,\diam(T_{m+2}))}{\diam(T_{m+2})} \,.
  \end{displaymath}
\end{lem}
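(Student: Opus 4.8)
The plan is to deduce everything from a single linear-algebraic estimate on the $(m+1)$-volume of the simplex, after which the bound on $\DC$ follows by merely dividing by $(\diam T_{m+2})^{m+2}$. Write $d = \diam T_{m+2}$ and $\beta = \beta(x_0,d)$. First I would record the trivial bound $\beta \le 1$: for \emph{any} affine $m$-plane $H$ through $x_0$ and any $y \in \Sigma \cap \Ball(x_0,d)$ one has $\dist(y,H) \le |y-x_0| < d$, so the infimum defining $\beta$ is at most $1$. Next, fixing $\eta > 0$ and using that $\beta$ is an infimum, I would select an affine $m$-plane $H \ni x_0$ with $\dist(y,H) \le (\beta+\eta)d$ for all $y \in \Sigma \cap \Ball(x_0,d)$. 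Since the vertices satisfy $|x_i - x_0| \le d$, each lies within distance $d$ of $x_0$, so (letting $\eta \downarrow 0$, and handling boundary vertices by a harmless enlargement of the radius or by passing to the closed ball) we obtain $\dist(x_i,H) \le \beta d$ for $i = 0,1,\ldots,m+1$.

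The heart of the argument is the following. Translating so that $x_0 = 0$ and letting $V = \vec H$ denote the $m$-dimensional direction space of $H$, I decompose each edge vector $v_i := x_i - x_0$ orthogonally as $v_i = u_i + w_i$ with $u_i \in V$ and $w_i \in V^\perp$; then $|w_i| = \dist(x_i,H) \le \beta d$ while $|u_i| \le |v_i| \le d$. Using the standard formula $\HM^{m+1}(\simp T_{m+2}) = \frac{1}{(m+1)!}\,|v_1 \wedge \cdots \wedge v_{m+1}|$ for the $(m+1)$-volume of a simplex, I expand the exterior product by multilinearity into $2^{m+1}$ terms $z_1 \wedge \cdots \wedge z_{m+1}$ with each $z_i \in \{u_i, w_i\}$. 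The decisive observation---the step I expect to carry the whole proof---is that the single purely tangential term $u_1 \wedge \cdots \wedge u_{m+1}$ \emph{vanishes}, since these are $m+1$ vectors lying in the $m$-dimensional space $V$ and are therefore linearly dependent. This is exactly where flatness of $\Sigma$ (smallness of $\beta$) gets converted into smallness of a volume that would otherwise be of order $d^{m+1}$.

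It then remains to estimate the surviving $2^{m+1}-1$ terms, each of which contains at least one normal factor $w_i$. By the elementary inequality $|z_1 \wedge \cdots \wedge z_{m+1}| \le \prod_i |z_i|$, such a term has norm at most $(\beta d)^j d^{m+1-j} = \beta^j d^{m+1} \le \beta d^{m+1}$, where $j \ge 1$ is its number of normal factors and the last step uses $\beta \le 1$. Summing and applying the triangle inequality gives $|v_1 \wedge \cdots \wedge v_{m+1}| \le (2^{m+1}-1)\,\beta\, d^{m+1}$, whence $\HM^{m+1}(\simp T_{m+2}) \le C(m)\,\beta\, d^{m+1}$ with $C(m) = (2^{m+1}-1)/(m+1)!$, which is the first claimed inequality; dividing by $d^{m+2}$ yields $\DC(T_{m+2}) \le C(m)\,\beta/d$, the second. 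The only genuinely delicate point is the boundary technicality in securing $\dist(x_i,H) \le \beta d$ for vertices that realize the diameter; everything else is the dimension-count vanishing together with routine multilinear bookkeeping.
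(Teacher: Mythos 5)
Your proof is correct, but it takes a genuinely different route from the paper's. The paper thickens the simplex $\mathbf{T}=\simp T_{m+2}$ by a ball $\mathbf{S}$ of radius $\beta d$ in the directions orthogonal to its $(m+1)$-dimensional span, computes $\HM^n(\mathbf{T}+\mathbf{S})$ exactly by Fubini, and then traps $\mathbf{T}+\mathbf{S}$ inside a cylinder of volume $C\beta^{n-m}d^n$ around an \emph{optimal} approximating $m$-plane, which it must first produce by a compactness argument on the Grassmannian $G(n,m)$; comparing the two volume expressions yields the bound. You instead decompose each edge vector tangentially and normally with respect to a near-optimal plane and expand $v_1\wedge\cdots\wedge v_{m+1}$ multilinearly: the single purely tangential term vanishes because $m+1$ vectors cannot be independent in an $m$-dimensional space, and every surviving term carries at least one factor of length $\le \beta d$, so Hadamard's inequality together with $\beta\le 1$ finishes the job. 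Your version is more elementary and self-contained: it gives an explicit constant $(2^{m+1}-1)/(m+1)!$, it needs neither the Fubini computation nor Grassmannian compactness (the infimum in the definition of $\beta$ is handled by letting $\eta\downarrow 0$ in the final inequality, which depends continuously on $\beta+\eta$), and it absorbs the degenerate case $\HM^{m+1}(\simp T_{m+2})=0$ automatically, whereas the paper treats it separately. Both arguments share the same small technicality, which you flag and the paper glosses over: a vertex realizing the diameter with $x_0$ lies on the boundary of the open ball $\Ball(x_0,d)$ and so is not literally controlled by the supremum defining $\beta(x_0,d)$; this is harmless here because the lemma is only ever invoked through Corollary~\ref{cor:dck-beta}, where the radius is enlarged anyway.
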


In fact, we will only use the following immediate consequence of
Lemma~\ref{lem:dc-beta}
\begin{cor}
  \label{cor:dck-beta}
  For $T_k = (x_0,\ldots,x_{k-1}) \in (\Sigma_o^{\rho})^k$ we have
  \begin{displaymath}
    \DC_{k,o,2\rho}(T_k)
    \le C \sup_{x_k,\ldots,x_{m+1} \in \Sigma_o^{2\rho}}
    \frac{\beta(x_0,\diam(x_0,\ldots,x_{m+1}))}{\diam(x_0,\ldots,x_{m+1})}
    \le C \sup_{\diam(T_k) \le r \le 4\rho} \frac{\beta(x_0,r)}r \notag \,.
  \end{displaymath}
\end{cor}

Let us now show how these lemmata can be used to prove
Theorem~\ref{thm:Boundedness}:
\begin{proof}[Proof of Theorem~\ref{thm:Boundedness}]
  Despite the fact that the integrand $\DC_k(x_0,\ldots,x_{k-1})$ depends on the
  whole of $\Sigma$, Lemma~\ref{lem:patches} tells us that it is enough to show
  that there is a $\rho > 0$ such $\E_p^k(\Sigma,\lambda,\rho,o)$ is finite for
  every $\lambda \in (0,\rho)$ and every $o \in \Sigma$. The Sobolev embedding
  theorem (Theorem~\ref{thm:SS-embed}) shows that $\Sigma$ is a compact $C^1$
  submanifold of $\R^n$. Together with the fact that $\Sigma$ is locally the
  graph of a $W^{1+s,p}$ function, this allow us to choose $\rho > 0$ so~small
  that for all $o \in \Sigma$ we~have after a suitable rotation of the ambient
  space
  \begin{equation}
    \label{graph-repr}
    (\Sigma - o) \cap \Ball_{10\rho} \subseteq \graph(f) = \big\{ (x,f(x)) \in \R^n : x \in \R^m \big\} \,,
  \end{equation}
  for some function $f \in W^{1+s,p}(\R^m, \R^{n-m})$ (depending on the choice
  of $o \in \Sigma$) that satisfies
  \begin{equation}
    \label{est:lip-const}
    \forall x,y \in \Disc_{10\rho} \quad |f(x) - f(y)| \le |x-y| \,.
  \end{equation}

  Let $0 \le i < j \le k-1$ and let $u,v \in \Sigma_o^{\rho}$. We set
  \begin{align*}
    \Sigma_{i,j} &= \left\{
      (w_0,\ldots,w_{k-1}) \in (\Sigma_o^{\rho})^k : \diam\{w_0,\ldots,w_{k-1}\} = |w_i - w_j|
    \right\} \\
    \text{and} \quad
    \Sigma(u,v) &= \left\{
      (w_1,\ldots,w_{k-2}) \in (\Sigma_o^{\rho})^{k-2} : \diam\{u,w_1,\ldots,w_{k-2},v\} = |v-u|
    \right\} \,.
  \end{align*}
  For any $(w_1,\ldots,w_{k-2}) \in \Sigma(u,v)$ and $j = 1,\ldots,k-2$, we
  have $|w_j-u| \le |v-u|$. Hence,
  \begin{displaymath}
    \HM^{m(k-2)}(\Sigma(u,v))
    \le \Big( 2^m \omega_m |v-u|^m \Big)^{k-2}
    \le C(m,k) |v-u|^{m(k-2)} \,,
  \end{displaymath}
  where $\omega_m$ denotes the volume of the $m$-dimensional unit ball.
  Note that
  \begin{displaymath}
    (\Sigma_o^{\rho})^k = \bigcup \big\{ \Sigma_{i,j} : 0 \le i < j \le k-1 \big\}\,.
  \end{displaymath}
  Since $\DC$ is invariant under permutations of its parameters, so is
  $\DC_{k,o,2\rho}$ and we have
  \begin{displaymath}
    \int_{\Sigma_{i,j}} \DC_{k,o,2\rho}(T_k)^p\ d\mu_k(T_k) = \int_{\Sigma_{a,b}} \DC_{k,o,2\rho}(T_k)^p\ d\mu_k(T_k) \,,
  \end{displaymath}
  for any $i < j$ and $a < b$. Hence
  \begin{multline}
    \label{est:Ep}
    \E_p^k(\Sigma,\rho,\lambda,o)
    = \int_{(\Sigma_o^{\rho})^k_{<\lambda}} \DC_{k,o,2\rho}(T_k)^p\ d\mu_k(T_k) 
    \le \sum_{0 \le i < j \le k-1} \int_{\Sigma_{i,j} \cap \{ |w_i - w_j| < \lambda \}} \DC_{k,o,2\rho}(T_k)^p\ d\mu_k(T_k) \\
    = 2 \binom k2
    \int_{\Sigma_o^{\rho}}
    \int_{\Sigma_o^{\rho} \cap \Ball(u,\lambda)}
    \int_{\Sigma(u,v)} \DC_{k,o,2\rho}(u,w_1,\ldots,w_{k-2},v)^p\
    d\HM^{m(k-2)}_{w_1,\ldots,w_{k-2}}\ d\HM^m_v\ d\HM^m_u \,.
  \end{multline}
  Let $|JF(z)| = \sqrt{\det(DF(z)^* DF(z))}$) denote the Jacobian of $F(z) =
  (z,f(z))$. Set $\beta_o(x,r) := \beta(o+x,r)$. Using Lemma~\ref{lem:dc-beta}
  and Corollary~\ref{cor:dck-beta} we may write
  \begin{multline}
    \label{Ep-beta}
    \E_p^k(\Sigma,\rho,\lambda,o)
    \le C \int_{\Sigma_o^{\rho}} \int_{\Sigma_o^{\rho} \cap \Ball(u,\lambda)}
    |v-u|^{m(k-2)} \sup_{|u-v| \le r \le 4\rho} \frac{\beta(u,r)^p}{r^p}\ d\HM^m_v\ d\HM^m_u \\
    \le C \int_{\Disc_{\rho}} \int_{\Disc(x,\lambda)}
    |F(y)-F(x)|^{m(k-2)}
    \sup_{|F(y)-F(x)| \le r \le 4\rho}
    \frac{\beta_o(F(x),r)^p}{r^p}
    |JF(x)| |JF(y)|\ dy\ dx \\
    \le C' \int_{\Disc_{\rho}} \int_{\Disc(x,\lambda)}
    |y-x|^{m(k-2)}
    \sup_{|y-x| \le r \le 4\rho}
    \frac{\beta_o(F(x),r)^p}{r^p}\ dy\ dx \,.
  \end{multline}
  To get to the last line, we used the fact that $F$ satisfies
  (cf.~\eqref{est:lip-const})
  \begin{displaymath}
    |y-x| \le |F(y) - F(x)| \le 2 |y-x| \,,
    \quad \text{hence also} \quad
    |JF(z)| \le 2^m \,.
  \end{displaymath}
  We set 
  \begin{displaymath}
    \Sigma_o^{x,r} = (\Sigma - o) \cap \Ball(F(x),r) \,.
  \end{displaymath}
  Observe that $r \beta(u,r)$ can be estimated by the distance of $\Sigma \cap
  \Ball(u,r)$ from the affine tangent plane $u+T_u\Sigma$. Hence, recalling the
  definition of the $\beta$-numbers, we~get
  \begin{align}
    \beta_o(F(x),r)
    &\le r^{-1} \inf_{H \in G(n,m)} \sup \big\{ \dist(w, F(x)+H) : w \in \Sigma_o^{x,r} \big\} \notag\\
    &\le r^{-1} \sup \big\{ \dist(w, F(x)+T_{F(x)}(\Sigma-o)) : w \in \Sigma_o^{x,r} \big\} \notag \\
    &\le r^{-1} \sup \big\{ |F(z) - F(x) - DF(x)(z-x)| : z \in \Disc(x,2r) \big\} \notag \\
    \label{est:beta-uv}
    &= r^{-1} \sup \big\{ |f(z) - f(x) - Df(x)(z-x)| : z \in \Disc(x,2r) \big\} \,. 
  \end{align}
  Plugging \eqref{est:beta-uv} into \eqref{Ep-beta}, we are led to
  \begin{displaymath}
    \E_p^k(\Sigma,\rho,\lambda,o) 
    \le C \int_{\Disc_{\rho}} \int_{\Disc(x,\lambda)}
    |y-x|^{m(k-2)}
    \sup_{\substack{|y-x| \le r \le 4\rho\\\substack{z \in \Disc(x,2r)}}}
    \frac{|f(z) - f(x) - Df(x)(z-x)|^p}{r^{2p}}
    \ dy\ dx \,.
  \end{displaymath}

  To estimate the term $|f(z) - f(x) - Df(x)(z-x)|$ we set
  \begin{displaymath}
    g_x(z) = f(z) - f(x) - Df(x)(z-x) \,, \quad \text{then } g_x(x) = 0 \,.
  \end{displaymath}
  Since $f \in W^{1+s,p} \subseteq W^{1,p}$ and $p>m$, using the
  Sobolev-Morrey embedding theorem, we obtain
  \begin{multline*}
    \sup_{z \in \Disc(x,2r)} |g_x(z) - g_x(x)| 
    \le C \sup_{z \in \Disc(x,2r)} |z-x|^{1-\frac mp} \left(
      \int_{\Disc(\frac 12(z+x),|z-x|)} |Dg_x(t)|^p\ dt
    \right)^{\frac 1p} \\
    \le \widetilde{C} r^{1-\frac mp} \left(
      \int_{\Disc(x,5r)} |Dg_x(t)|^p\ dt
    \right)^{\frac 1p} 
    = \widehat{C} r^{1- \frac mp} \left(
      \int_{\Disc(x,5r)} |Df(t) - Df(x)|^p\ dt
    \right)^{\frac 1p} \,,
  \end{multline*}
  where the right hand side does not depend on $z$ anymore. Hence, we get the
  estimate
  \begin{equation}
    \label{est:Epk}
    \E_p^k(\Sigma,\rho,\lambda,o) 
    \le C \int_{\Disc_{\rho}} \int_{\Disc(x,\lambda)}
    |y-x|^{m(k-2)}
    \sup_{r \ge |y-x|} \frac{\left(\int_{\Disc(x,5r)} |Df(t) - Df(x)|^p\ dt\right)} {r^{m+p}}
    \ dy\ dx \,.
  \end{equation}
  Observe that
  \begin{align}
    \sup_{r \ge |y-x|} \frac {\int_{\Disc(x,5r)} |Df(t) - Df(x)|^p\ dt}{r^{m+p}}
    &\le C \sup_{r \ge |y-x|} \int_r^{2r}
    \frac{\int_{\Disc(x,5r)} |Df(t) - Df(x)|^p\ dt}{\tau^{m+p+1}} \ d\tau \notag \\
    &\le C \int_{|y-x|}^\infty  \frac{\int_{\Disc(x,5\tau)} |Df(t) - Df(x)|^p\ dt}{\tau^{m+p+1}}\ d\tau \notag \\
    &\le C \int_{\R^m} \int_{\tau \ge \max\{|t-x|/5,|y-x|\}}
    \frac {|Df(t) - Df(x)|^p}{\tau^{m+p+1}}\ d\tau\ dt \notag \\
    \label{est:rxy}
    &\le C \int_{\R^m} \frac{|Df(t) - Df(x)|^p }{\max\{|t-x|/5,|y-x|\}^{m+p}}\ dt \,.
  \end{align}
  Note that for any choice of $x$, $y$ and $t$ we have
  \begin{equation}
    \label{est:yxmax}
    \frac{|y-x|^{m(k-2)}}{\max\{|t-x|,|y-x|\}^{m(k-2)}} \le 1 \,.
  \end{equation}
  Combining~\eqref{est:rxy} and~\eqref{est:yxmax} with~\eqref{est:Epk} and using
  Fubini's theorem, we are led to
  \begin{align}
    \E^k_p(\Sigma, \rho, \lambda, o) 
    &\le C \int_{\R^m} \int_{\R^m} \int_{\R^m} 
    \frac{|Df(t) - Df(x)|^p }{\max\{|t-x|,|y-x|\}^{m+p-m(k-2)}} \ dt\ dy\ dx \notag \\
    \label{est:Epk2}
    &\le C \int_{\R^m} \int_{\R^m} \int_{\R^m}
    \frac{|Df(t) - Df(x)|^p}{\max\{|t-x|,|y-x|\}^{m+p-m(k-2)}}\ dy\ dt\ dx \,.
  \end{align}
  We can compute the innermost integral by dividing it into two parts
  \begin{multline}
    \label{est:inner}
    \int_{\R^m} \frac{|Df(t) - Df(x)|^p}{\max\{|t-x|,|y-x|\}^{m+p-m(k-2)}}\ dy
    = \int_{|y-x| \le |t-x|} \frac{|Df(t) - Df(x)|^p}{|t-x|^{m+p-m(k-2)}}\ dy \\
    + \int_{|y-x| > |t-x|} \frac{|Df(t) - Df(x)|^p}{|y-x|^{m+p-m(k-2)}}\ dy
    = C \frac{|Df(t) - Df(x)|^p}{|t-x|^{p-m(k-2)}} \,.
  \end{multline}
  Plugging~\eqref{est:inner} into~\eqref{est:Epk2} we finally get
  \begin{displaymath}
    \E^k_p(\Sigma, \rho, \lambda, o) 
    \le C \int_{\R^m} \int_{\R^m} \frac{|Df(t)-Df(x)|^p}{|t-x|^{p-m(k-2)}}\ dt\ dx
    \le C \|Df\|_{W^{s,p}} \,,
  \end{displaymath}
  where $s= 1 - \frac {m(k-1)}{p}$.
\end{proof}

\begin{proof}[Proof of Lemma~\ref{lem:dc-beta}]
  For a linear subspace $W$ of $\R^n$ let $P_W$ denote the orthogonal
  projection of $\R^n$ onto $W$ and $P^\bot_W:= id_{\R^n} - P_W$ be the
  orthogonal projection of $\R^n$ onto the orthogonal complement of $V$.
  Furthermore, let $\mathbf{T} = \simp T_{m+2}$ and let $d =
  \diam(\mathbf{T})$.

  Without loss of generality we can assume that $x_0=0$. If the vectors
  $\{x_1, \ldots, x_{m+1}\}$ are not linearly independent, then
  $\HM^{m+1}(\mathbf{T}) = 0$ and the statement of the lemma is true.

  Let $x_1, \ldots x_{m+1}$ be linearly independent and let $W$ denote the
  $(m+1)$-dimensional vector space spanned be these vectors. Set
  \begin{equation*}
    \mathbf{S} := \{ s \in W^\bot : |s| \le \beta(0,d)d \} \,.
  \end{equation*}
  Then, for the set $\mathbf{T} + \mathbf{S}$, using Fubini's theorem we obtain
  \begin{equation}
    \label{eq:VolumeST}
    \HM^{n}(\mathbf{T}+\mathbf{S}) 
    = \HM^{m+1}(\mathbf{T}) \HM^{n-m-1}(\mathbf{S})
    = \omega_m \HM^{m+1}(\mathbf{T}) d^{n-m-1} \beta(0,d)^{n-m-1}
  \end{equation}
  where $\omega_m$ is the volume of the $m$-dimensional unit ball.

  From the definition of the Jones' $\beta$-numbers we can find a sequence of
  $m$-dimensional vector spaces $V_j$ such that
  \begin{equation*}
    \sup_{y \in \Sigma \cap \Ball(x_0,d)} |P_{V_j}^\bot(y)| \le \big( \beta(x_0,d) + \tfrac 1j \big)d \,.
  \end{equation*}
  Since the Grassmannian $G(n,m)$ of all $m$-dimensional subspaces of $\R^n$ is
  a compact manifold, we can find a subsequence $V_{j_k}$ converging to some $V
  \in G(n,m)$. Observe also that the mapping $Q : G(n,m) \to \R^n$ given by
  $Q(V) = P_V(y)$ is continuous\footnote{The metric on $G(n,m)$ is defined by
    the formula $\dist(U,V)=\|P_U - P_V\|$.} for any choice of $y \in \R^n$.
  In~consequence, we get the estimate
  \begin{equation*}
    \forall y \in \Sigma \cap \Ball(x_0,d)
    \quad
    |P_V^\bot(y)| \le \beta(x_0,d) d  \,.
  \end{equation*}
  The vertices of $\mathbf{T}$ lie in $\Sigma \cap \Ball(x_0,d)$ and
  $\mathbf{T}$ is convex, so we also have
  \begin{equation*}
    \forall t \in \mathbf{T}
    \quad
    |P_V^\bot(t)| \le \beta(x_0,d) d \,.
  \end{equation*}
  Let $y \in \mathbf{T}+\mathbf{S}$ and let $t \in \mathbf{T}$ and $s \in
  \mathbf{S}$ be such that $s+t = y$. Using the triangle inequality we see
  that
  \begin{equation*}
    |P_V(y)| \le |y| \le (1 + \beta(0,d)) d  
  \end{equation*}
  and 
  \begin{equation*}
    |P_V^\bot(y)| \le |P_V^\bot(t)|  + |P_V^\bot(s)| \le 2 \beta(x_0,d) d \,.
  \end{equation*}
  Hence, $\mathbf{T}+\mathbf{S}$ is a subset of
  \begin{equation*}
    Z = \big\{ y \in \R^n : |P_V (y)| \le 2 d,\, |P_V^\bot(y)| \le 2 \beta(0,d) d \big\} \,.
  \end{equation*}
  Using again Fubini's theorem, we obtain
  \begin{equation}
    \label{eq:EstimateST}
    \HM^n(\mathbf{T}+\mathbf{S}) \le \HM^n(Z) = C 2^{n-m} \beta(0,d)^{n-m} d^n \,.
  \end{equation}
  Combining \eqref{eq:VolumeST} and \eqref{eq:EstimateST} we finally deduce
  \begin{equation*}
    \HM^{m+1} (T) \le C \beta(0,d) d^{m+1} \,.
  \end{equation*}
\end{proof}

\begin{proof}[Proof of Lemma~\ref{lem:patches}] 
  Fix some $\rho > 0$. Since $\Sigma$ is compact, we can cover it by a finite
  number of balls of radius $\rho$
  \begin{displaymath}
    \Sigma \subseteq \bigcup_{i=1}^N \Ball(x_i,\rho) \,,
    \quad \text{where }x_i \in \Sigma \text{ for } i = 1,\ldots,N \,.
  \end{displaymath}
  This covering has its Lebesgue number, say $\lambda \in (0,\rho)$, so that any
  set of points in $\Sigma$ of diameter less than $\lambda$ lies entirely in one
  of the balls $\Ball(x_i,\rho)$ for some $i \in \{ 1, \ldots, N \}$. Observe
  that if the diameter $\diam(T_k) \ge \lambda$, then $\DC_k(T_k) \le C(n,m)
  \lambda^{-1}$. Also, if $w_0,\ldots,w_{k-1} \in \Sigma_{x_i}^{\rho}$ and
  $w_k,\ldots,w_{m+1} \in \Sigma \setminus \Sigma_{x_i}^{2\rho}$, then the
  diameter $\diam(w_0,\ldots,w_{m+1}) \ge \rho$ and we have
  $\DC(w_0,\ldots,w_{m+1}) \le C(n,m) \rho^{-1}$. Hence, for $T_k \in
  (\Sigma_{x_i}^{\rho})^k$, we have
  \begin{multline*}
    \DC_k(T_k) \le
    \sup_{w_k,\ldots,w_{m+1} \in \Sigma_{x_i}^{2\rho}} \DC(w_0,\ldots,w_{m+1}) \\
    + \sup_{w_k,\ldots,w_{m+1} \in \Sigma \setminus \Sigma_{x_i}^{2\rho}} \DC(w_0,\ldots,w_{m+1}) 
    \le \DC_{k,x_i,2\rho}(T_k) + C(n,m) \rho^{-1} \,.
  \end{multline*}
  In consequence, the following estimate holds
  \begin{multline*}
    \E_p^k(\Sigma) =
    \int_{\Sigma^{k}_{\ge \lambda}} \DC_k(T_k)^p\ d\mu_k(T_k)
    + \int_{\Sigma^{k}_{< \lambda}} \DC_k(T_k)^p\ d\mu_k(T_k) \\
    \le \widetilde{C}(n,m) \HM^m(\Sigma)^{k} (\lambda^{-p} + \rho^{-p})
    + \sum_{i=1}^N \int_{(\Sigma_{x_i}^{\rho})^k_{< \lambda}} \DC_{k,x_i,2\rho}(T_k)^p\ d\mu_k(T_k) \,.
  \end{multline*}
\end{proof}

\section{Regularizing effects of $\E_p^k$}

Let us now prove the other implication of the main theorem, i.e.
\begin{thm}
  \label{thm:RegularizingEffect}
  Let $ k \in \{2, \ldots, m+2\}$ and $p > m(k-1)$ and $\Sigma$ be an
  $m$-dimensional embedded $C^1$ submanifold of the Euclidean space $\R^n$. If
  $\E^k_p (\Sigma)$ is finite, then $\Sigma$ is locally given by graphs functions in 
  $W^{1+s,p}(\mathbb R^m, \mathbb R^{n-m})$,
  where $s=1-\frac{m(k-1)}{p}$.
\end{thm}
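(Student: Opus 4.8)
The plan is to show that each local graph function of $\Sigma$ is of class $W^{1+s,p}$, using the second-order finite-difference (Besov) description of these spaces provided by Theorem~\ref{thm:equiv-norms} with $\sigma=1+s\in(1,2)$. First I would localize: since $\Sigma$ is a compact embedded $C^1$ manifold, I cover it by finitely many patches, each of which is, after a rotation, the graph of a $C^1$ function $f\colon\Disc_R\to\R^{n-m}$; by shrinking $R$ I may assume $f$ has arbitrarily small Lipschitz constant, so that $F(z)=(z,f(z))$ has Jacobian bounded above and below by dimensional constants and $|z-z'|\le|F(z)-F(z')|\le2|z-z'|$. Since $f\in L^p$ on a bounded domain is automatic, it then suffices to bound the seminorm $\iint|f(x)-2f(\tfrac12(x+y))+f(y)|^p\,|x-y|^{-m-(1+s)p}\,dx\,dy$ by a constant times $\E_p^k(\Sigma)$, noting that the exponent satisfies $m+(1+s)p=2p-m(k-2)$.

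The geometric heart is a lower bound for $\DC$ that is, in a sense, dual to Lemma~\ref{lem:dc-beta}. Given $m+2$ graph points I single out one apex and an $m$-dimensional base consisting of the other $m+1$ points; writing $\ell\colon\R^m\to\R^{n-m}$ for the affine map interpolating $f$ at the $m+1$ base parameters, one has $\HM^{m+1}(\simp)=\tfrac1{m+1}\HM^m(\mathrm{base})\,\dist(\mathrm{apex},\aff(\mathrm{base}))$, and since this affine hull is exactly $\graph(\ell)$ the height equals $(1+\|D\ell\|^2)^{-1/2}|f(w)-\ell(w)|$ for the apex parameter $w$. When $w=\tfrac12(u_0+u_1)$ is the midpoint of two base parameters $u_0,u_1$, affinity gives $\ell(w)=\tfrac12(f(u_0)+f(u_1))$, so the height is comparable to $|f(u_0)-2f(\tfrac12(u_0+u_1))+f(u_1)|$. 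Combining this with the fact that $\HM^m(\mathrm{base})$ dominates the $m$-volume of the simplex spanned by the base parameters in $\R^m$, and with $\diam\sim|u_0-u_1|$, yields for configurations that are fat at a single scale $t=|u_0-u_1|$ the estimate $\DC\gtrsim|f(u_0)-2f(\tfrac12(u_0+u_1))+f(u_1)|\,t^{-2}$.

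For $2\le k\le m+1$ there is at least one free point in the supremum defining $\DC_k$, and I would use it as the apex placed over the midpoint. Concretely, I identify two of the $k$ integration variables with $u_0=x$ and $u_1=y$ and confine the remaining $k-2$ integration variables to balls of radius $\varepsilon t$ (with $t=|x-y|$) centred at fixed points that, together with the optimally chosen supremum base points, make the base a fat $m$-simplex of diameter $\sim t$. Fatness is an open condition, so it persists on these balls; it is exactly what keeps $\|D\ell\|$ bounded, hence the height comparison uniform, and the base volume $\gtrsim t^m$. The $(k-2)$-fold inner integration contributes a factor $\sim t^{m(k-2)}$, so after integrating in $x,y$ one is led to $\E_p^k(\Sigma)\gtrsim\iint|f(x)-2f(\tfrac12(x+y))+f(y)|^p\,|x-y|^{-(2p-m(k-2))}\,dx\,dy$, which is precisely the seminorm above; by Theorem~\ref{thm:equiv-norms} this gives $f\in W^{1+s,p}$ locally.

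The case $k=m+2$ is where I expect the main obstacle. Here the supremum is vacuous, every vertex is an integration variable, and the apex can no longer be pinned to a midpoint on a set of positive measure; what the energy controls is instead the symmetric deviation $|f(w)-\ell(w)|$ of $f$ from its affine interpolant at $m+1$ fat nodes, integrated over all $m+2$ parameters. One must then show that this affine-interpolation functional still dominates the Besov seminorm, i.e.\ establish the corresponding finite-difference characterization of $W^{1+s,p}$; the difficulty is that $|f(w)-\ell(w)|$ differs from the midpoint second difference by a term of the same (second) order, so no pointwise reduction to Theorem~\ref{thm:equiv-norms} is available and the comparison must be carried out at the level of the integrated seminorms. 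A secondary but pervasive technical point, present for every $k$, is the uniform control of simplex fatness and of $\|D\ell\|$ by the Lipschitz constant of $f$, which is what legitimizes replacing heights by second-order differences; I would handle it by restricting all auxiliary vertices to well-separated balls at the single scale $t$ and invoking elementary but careful linear algebra for the interpolation operator.
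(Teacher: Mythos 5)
Your argument for $2 \le k \le m+1$ is a legitimate alternative to the paper's: you spend the free supremum point as an apex over the midpoint $\tfrac12(x+y)$, convert the height over the affine interpolant of the base into the second difference $f(x)-2f(\tfrac12(x+y))+f(y)$, and the exponent bookkeeping ($m+(1+s)p = 2p-m(k-2)$, with $t^{m(k-2)}$ from the confined auxiliary variables) is correct. But the theorem is claimed for all $k\in\{2,\dots,m+2\}$, and you explicitly leave $k=m+2$ --- the full integral Menger curvature, i.e.\ precisely the case needed for Corollary~\ref{cor:AdmissibleSets} --- unresolved. As it stands this is a genuine gap, not a technicality: your mechanism for producing a second difference requires a vertex that can be \emph{chosen} at the midpoint, and for $k=m+2$ no such vertex exists, as you correctly diagnose. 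Your fallback suggestion (prove that the integrated affine-interpolation deviation dominates the Besov seminorm) is a nontrivial separate theorem that you do not supply.

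The missing idea, which the paper uses and which handles all $k$ uniformly, is to manufacture the second difference by \emph{symmetrization in an integration variable} rather than by placing a supremum point. Write $\HM^{m+1}(\simp)$ via the outer product, so that $\DC^p$ contains $\bigl|\binom{f(y+w_1)-f(y)}{w_1}\wedge\cdots\bigr|^p$; restrict the inner variables $(w_2,\dots,w_{k-1})$ to the fat set $\Omega^k_{w_1}=\{|w_i|\le|w_1|,\ |w_2\wedge\cdots\wedge w_{k-1}|\ge\tfrac12|w_1|^{k-2}\}$, whose measure is $c\,|w_1|^{m(k-2)}$; then average the integral with its image under $w_1\mapsto -w_1$ and use linearity of the wedge in its first slot together with the triangle inequality to replace the first factor by $\binom{f(y+w_1)-2f(y)+f(y-w_1)}{0}$. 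A projection/Laplace-expansion argument then bounds the wedge below by $\tfrac12|f(y+w_1)-2f(y)+f(y-w_1)|\,|w_1|^m$ (for $k\le m+1$ one completes with an orthonormal frame in the supremum slots; for $k=m+2$ the fatness condition on $\Omega^{m+2}_{w_1}$ does the job with no supremum at all). This yields the Besov seminorm centered at the integration variable $y$ with endpoints $y\pm w_1$, which is equivalent to the one in Theorem~\ref{thm:equiv-norms}. If you incorporate this reflection trick, your $k=m+2$ obstacle disappears and the remaining issues in your sketch (persistence of fatness, control of $\|D\ell\|$ by the Lipschitz constant) are routine.
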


Before we start, let us recall a definition of the outer product:

\begin{defin}
  \label{def:wedge}
  Let $w_1$, \ldots, $w_l$ be some vectors in $\R^n$. We define \emph{the outer
    product $w_1 \wedge \cdots \wedge w_l$} to be a vector in $\R^{\binom nl}$,
  whose coordinates are exactly the $l$-minors of the $(l \times n)$-matrix
  $(w_1,\ldots,w_l)$. The coordinates of $w_1 \wedge \cdots \wedge w_l$ are
  indexed by $l$-tuples $(i_1,\ldots,i_l)$, where $i_j \in \{1,\ldots,n\}$ for
  each $j = 1,\ldots,l$ and $i_1 < i_2 < \cdots < i_l$.
\end{defin}

\begin{rem}
  \label{rem:wedge-meas}
  A~standard fact from linear algebra says that the length $|w_1 \wedge \cdots
  \wedge w_l|$ of an outer product of $w_1$, \ldots, $w_l$ is equal to the
  $l$-dimensional volume of the parallelotope spanned by $w_1$, \ldots, $w_l$.
\end{rem}

Now we are ready to prove Theorem~\ref{thm:RegularizingEffect}:

\begin{proof}[Proof of Theorem~\ref{thm:RegularizingEffect}]
  For a point $p \in \Sigma$ we have to show that a small neighborhood
  of $p$ in $\Sigma$ can be given as the graph of a $W^{1+s,p}$ function on 
  $\mathbb R^m$. We can assume after a suitable translation and rotation that $p=0$
  and since $\Sigma$ is of class $C^1$ that there is a function
  $f \in C^1(\R^m, \R^{n-m})$ satisfying $f(0)=0$,
  \begin{equation*}
    \|Df\|_{L^\infty} \le 1
    \quad \text{and} \quad
    g(\Disc_{2 \delta}) \subset \Sigma,
  \end{equation*}
  where $g(x) = (x,f(x))$.
  We will show that then $f \in B^{1+s,p}(\Disc_{\delta})$. Using that 
  $B^{1+s,p}(\Disc_{\delta})=W^{1+s,p}(\Disc_{\delta})$ by Theorem~\ref{thm:equiv-norms}
  and the extension Theorem~\ref{thm:extension} this proves Theorem~\ref{thm:RegularizingEffect}.

  Recalling Definition~\ref{def:wedge} and Remark~\ref{rem:wedge-meas}, for $y,
  w_1, \ldots, w_{m+1} \in \Disc_{\delta}$ the following holds
  \begin{multline}
    \label{eq:simp-meas}
    \HM^{m+1} (\simp(g(y), g(y+w_1), \ldots, g(y+w_{m+1}))) \\
    = \frac 1{m+1} \left| (g(y + w_1)-g(y)) \wedge \ldots \wedge (g(y + w_{m+1})- g(y))\right| \\
    = \frac 1 {m+1} \left| 
      \binom{f(y+w_1) -  f(y)}{w_1}
      \wedge \ldots \wedge 
      \binom{f(y+w_{m+1}) -  f(y)}{w_{m+1}}
    \right| \,.
  \end{multline}
  For fixed $w_1 \in \R^n$ let us set
  \begin{align*}
    \Omega_{w_1}^k := \Big\{(w_2, \ldots, w_{k-1}) \in (\Disc_{\delta})^{k-2}):
    &|w_i| \le |w_1|\ \forall i \in \{2, \ldots, k-1\} \,,\\
    &|w_2 \wedge \ldots \wedge w_{k-1}| \ge \frac 1 2 |w_1|^{k-2} 
    \Big\} \,.
  \end{align*}
  An easy scaling argument leads to 
  \begin{equation}
    \label{est:omega-meas}
    \HM^{m(k-2)}(\Omega_{w_1}^k) 
    = |w_1|^{m(k-2)}\HM ^{m(k-2)}\big(\Omega_{\frac{w_1}{|w_1}|}^k\big)
    = c |w_1|^{m(k-2)} \,,
    \quad \text{where } c=\HM ^{m(k-2)}\big(\Omega_{\frac{w_1}{|w_1|}}^k\big)
  \end{equation}
  obviously does not depend on $w_1$.

  \begin{rem}
    Please note that all the following estimates also hold for $k=m+2$ using the
    convention that there is no supremum and $m+2$ integrals in this case.
  \end{rem}

  Using~\eqref{eq:simp-meas} we can write
  \begin{multline*}
    \E^k_p (\Sigma) 
    = \int_{\Sigma^k} \sup_{x_k, \ldots, x_{m+1} \in \Sigma} \DC(x_0, \ldots, x_{m+1})^p
    \ d\HM^{mk}_{x_0,\ldots,x_{k-1}} \\
    \ge c \int_{(\Disc_\delta)^k} \sup_{\substack{w_j \in \Disc_{\delta}\\j=k,\ldots,m+1}} 
    \frac{\HM^{m+1}(\simp(g(y), g(y+w_1), \ldots, g(y+w_{m+1})))^p}{\diam (T)^{p(m+2)}}   
    \ dw_{k-1} \cdots dw_1\ dy \\
    \ge \bar{c}
    \int_{(\Disc_\delta)^2} \int_{\Omega_{w_1}^k} |w_1|^{-p(m+2)} 
    \sup_{\substack{w_j \in \Disc_{\delta}\\j=k,\ldots,m+1}} 
    \left|
      \binom{f(y+w_1) -  f(y)}{w_1}
      \wedge \cdots \wedge
      \binom{f(y+w_{m+1}) -  f(y)}{w_{m+1}}
    \right|^p\\
    \ dw_{k-1} \cdots dw_1\ dy  \,.
  \end{multline*}
  Now, we use a simple trick: we write the last line as $\tilde c/2$ times twice
  the integral. We leave the first as it is and substitute $w_1 \mapsto -w_1$ in
  the second integral to get
  \begin{multline*}
    \E^k_p (\Sigma) 
    \ge \frac{\bar{c}}2
    \Bigg\{
    \int_{(\Disc_\delta)^2} \int_{\Omega_{w_1}^k} |w_1|^{-p(m+2)} 
    \sup_{\substack{w_j \in \Disc_{\delta}\\j=k,\ldots,m+1}} 
    \left| \binom{f(y+w_1) -  f(y)}{w_1} \wedge \cdots \right. \\
    \left. \cdots \wedge \binom{f(y+w_{m+1}) -  f(y)}{w_{m+1}} \right|^p
    \ dw_{k-1} \cdots dw_1\ dy\\
    + \int_{(\Disc_\delta)^2} \int_{\Omega_{w_1}^k} |w_1|^{-p(m+2)} 
    \sup_{\substack{w_j \in \Disc_{\delta}\\j=k,\ldots,m+1}} 
    \left| \binom{f(y-w_1) -  f(y)}{-w_1} \wedge \cdots \right. \\
    \left. \cdots \wedge \binom{f(y+w_{m+1}) -  f(y)}{w_{m+1}} \right|^p
    \ dw_{k-1} \cdots dw_1\ dy \Bigg\} \,.
  \end{multline*}
  Next, we apply the triangle inequality for the supremum norm obtaining
  \begin{multline}
    \label{est:Epk-triangle}
    \E^k_p (\Sigma) 
    \ge \frac{\bar{c}}2
    \int_{(\Disc_\delta)^2} \int_{\Omega_{w_1}^k} |w_1|^{-p(m+2)} 
    \sup_{\substack{w_j \in \Disc_{\delta}\\j=k,\ldots,m+1}} 
    \left| \binom{f(y+w_1) -  2f(y) + f(y-w_1)}{0} \wedge \cdots \right. \\
    \left. \cdots \wedge \binom{f(y+w_{m+1}) -  f(y)}{w_{m+1}} \right|^p
    \ dw_{k-1} \cdots dw_1\ dy \,.
  \end{multline}
  To estimate this further, for a given $w_1 \in \R^n$ and $(w_2, \ldots,
  w_{k-1}) \in \Omega^k_{w_1}$, we choose vectors $w_{k}, \ldots, w_{m+1}$ such
  that $w_{k}/|w_1|, \ldots, w_{m+1}/|w_1|$ forms an orthonormal basis of the
  orthogonal complement of $\lin(w_2, \ldots, w_{k-1})$. For $k=1,\ldots,n$.
  Furthermore, we let $e \in \R^{n-m}$ be a unit vector satisfying $\langle
  f(y+w_1)-2f(y)+f(y-w_1), e \rangle = |f(y+w_1)-2f(y)+f(y-w_1)|$ and we set $X
  = \lin\{(e,0), (0,w_2), \ldots, (0,w_{m+1})\} \subseteq \R^n$. For brevity of
  notation we set
  \begin{displaymath}
    v = f(y+w_1)-2f(y)+f(y-w_1) \in \R^{n-m} \,.
  \end{displaymath}
  Observe that the orthogonal projection onto $X$ cannot increase the
  $(m+1)$-dimensional measure of any set. Employing the fact that $(e,0)$ is
  orthogonal to each of $(0,w_k)$ for $k=2,\ldots,m+1$ and then using Laplace
  expansion of the determinant with respect to the first column, we
  obtain\footnote{Note that in the first line the wedged vectors are
    $n$-dimensional while in the second line they are $(m+1)$-dimensional.}
  \begin{align*}
    &\left|
      \binom v0 \wedge
      \binom{f(y+w_2) - f(y)}{0}
      \wedge \ldots \wedge 
      \binom{f(y+w_{m+1}) -  f(y)}{w_{m+1}}
    \right| \\
    &\geq 
    \left|
      \binom{\langle v, e \rangle}{0}
      \wedge \binom{\langle f(y+w_{2}) -  f(y), e \rangle}{w_{2}}
      \wedge \ldots \wedge 
      \binom{\langle f(y+w_{m+1}) -  f(y), e \rangle}{w_{m+1}}
    \right| 
    \\
    &= \left| \det
      \begin{pmatrix}
        \langle v, e \rangle 
        & \langle f(y+w_2) - f(y), e \rangle
        &\cdots
        & \langle f(y+w_{m+1}) - f(y), e \rangle \\
		0 & w_2 & \cdots & w_{m+1}
      \end{pmatrix}
    \right|
    \\
    &=  |f(y+w_1) - 2f(y) + f(y - w_1)| |w_2 \wedge \ldots \wedge w_{k-1}| |w_1|^{m+2-k} 
    \\
    &\geq \frac 12 |f(y+w_1) - 2f(y) + f(y - w_1)| |w_1|^{m}\,.
  \end{align*}
  Hence, for all $w_1 \in \R^m$ and $(w_2, \ldots, w_{k-1}) \in
  \Omega^k_{w_1}$ we have
  \begin{multline}
    \label{est:sup-wedge}
    \sup_{w_k, \ldots, w_{m+1} \in \Disc_{|w_1|}}
    \left|
      \binom{f(y+w_1) -  2f(y) + f(y - w_1)}{0}
      \wedge \cdots \wedge 
      \binom{f(y+w_{m+1}) -  f(y)}{w_{m+1}}
    \right| \\
    \ge  \frac 12 |f(y+w_1) -  2f(y) + f(y - w_1)| |w_1|^{m} \,.
  \end{multline}
  Plugging~\eqref{est:sup-wedge} into~\eqref{est:Epk-triangle}, we finally get
  \begin{multline*}
    \E^k_p (\Sigma)
    \ge c \int_{(\Disc_\delta)^2} \int_{\Omega_{w_1}^k} |w_1|^{-p(m+2)} 
    \sup_{w_k, \ldots, w_{m+1} \in \Disc_{|w_1|}}
    \left| \binom{f(y+w_1) -  2f(y) + f(y - w_1)}{0} \wedge \cdots \right. \\ 
    \left. \cdots \wedge \binom{f(y+w_{m+1}) -  f(y)}{w_{m+1}} \right|^p
    \ dw_{k-1} \cdots dw_1\ dy \\
    \ge \bar{c} \int_{(\Disc_\delta)^2} \int_{\Omega_{w_1}^k} 
    \frac {|f(y + w_1) - 2 f(y) + f(y - w_1)|^p }{|w_1|^{p(m+2)-pm}}
    \ dw_{k-1} \cdots dw_1\ dy\\
    = \tilde{c} \int_{(\Disc_\delta)^2}
    \frac{|f(y + w_1) - 2 f(y) + f(y - w_1)|^p }{|w_1|^{2p-m(k-2)}}\ dw_1\ dy \,.
  \end{multline*}
  By Theorem~\ref{thm:equiv-norms}, we thus have $f \in W^{\tilde s,p}(\Disc_{\delta})$,
  where $\tilde s$ is given through the relation $m+\tilde sp = 2p - m(k-2)$ and hence $\tilde s = 2
  - \frac {m(k-1)}{p} = 1+s$.
\end{proof}

\section*{Acknowledgements}
The first author was supported by Swiss National Science Foun\-dation Grant
Nr. 200020 125127 and ``The Leverhulme Trust''.\\
The second author was supported by the Polish Ministry of Science grant
no.~N~N201 397737 (years 2009-2012).

\bibliography{mencurv}{}
\bibliographystyle{hplain}

\end{document}